\newtheorem{theorem}{Theorem}[subsection]
\newtheorem{lemma}[theorem]{Lemma}
\newtheorem{corollary}[theorem]{Corollary}
\newtheorem{definition}[theorem]{Definition}
\newtheorem{remark}[theorem]{Remark}
\newtheorem{example}[theorem]{Example}
\def\cB{{\cal B}}
\def\cD{{\cal D}}
\def\rX{{\mathfrak X}}
\def\rg{\mathfrak g}
\def\R{{\bf R}}
\def\Aut{{\hbox{\bf Aut}\;}}
\def\Out{\hbox{\bf Out}}
\def\Der{\hbox{\bf Der}}
\def\kernel{{\hbox{\bf Ker}\;}}
\def\g{\mathfrak{g}}
\def\h{\mathfrak{h}}
\def\D{\mathcal{D}}
\title{Comparison of categorical characteristic classes of transitive Lie algebroid with Chern-Weil homomorphism}
\author{Mishchenko, A.S. \\(Harbin Institute of Technology, China,\\ 
 Moscow State University, Russia),\\
Li Xiaoyu \\(Harbin Institute of Technology, China)}
\begin{document}
\maketitle

\begin{abstract}
Transitive Lie algebroids have specific properties that allow to look at the transitive
Lie algebroid as an element of the object of a homotopy functor. Roughly speaking each
transitive Lie algebroids can be described as a vector bundle over the tangent bundle of
the manifold which is endowed with additional structures. Therefore transitive Lie
algebroids admits a construction of inverse image generated by a smooth mapping of
smooth manifolds.

Due to to K.Mackenzie (\cite{Mck-2005}) the construction can be managed as a homotopy functor $\mathcal{TLA}_{\rg}$ from category of
smooth manifolds to the transitive Lie algebroids. The functor $\mathcal{TLA}_{\rg}$ associates
with each smooth manifold $M$ the set $\mathcal{TLA}_{\rg}(M)$ of all transitive algebroids with
fixed structural finite dimensional Lie algebra $\rg$. Hence one can construct
(\cite{Mi-2010},\cite{Mi-2011})
a classifying space $\cB_{\rg}$ such that
the family of all transitive  Lie algebroids with fixed Lie algebra $\rg$ over the manifold $M$
has one-to-one correspondence with the family of homotopy classes of continuous maps $[M,\cB_{\rg}]$:
$
\mathcal{TLA}_{\rg}(M)\approx [M,\cB_{\rg}].
$

It allows to describe characteristic classes of transitive Lie algebroids from
the point of view a natural transformation of functors similar to the classical
abstract characteristic classes for vector bundles and to compare them with that
derived from the Chern-Weil homomorphism by J.Kubarski(\cite{Kub-91e}).
As a matter of fact we show that the Chern-Weil homomorphism
does not cover all  characteristic classes from categorical
point of view.

\end{abstract}
\newpage

\section{Basic definitions and functor $\mathcal{TLA}_{\mathfrak{g}}(\bullet)$}
\subsection{Definitions}
\begin{definition}(See \cite{Mck-2005},Definition 3.1.1)
A Lie algebroid $A$ over a smooth manifold $M$ is a vector
bundle $p:A\rightarrow M$ together with a Lie algebra
structure $\{\bullet\}$ on the space of $\Gamma^{\infty}(A;M)$ and
a bundle map $a:A\rightarrow TM $ called the anchor,
such that
\begin{enumerate}\renewcommand{\labelenumi}{$($\roman{enumi}$)$}
\item the induced map
$a:\Gamma(A;M)\rightarrow \Gamma(TM;M)=\rX^{1}(M)$
is a Lie algebra homomorphism
\item for any sections $\alpha, \beta\in\Gamma(A;M)$
and smooth function $f\in C^{\infty}(M)$ we have the Leibniz
identity
$$\{\alpha,f\cdot\beta\}=f\cdot\{\alpha,\beta\}+
a(\alpha)(f)\cdot\beta$$
\end{enumerate}

\end{definition}

We call $A$ a regular Lie algebroid if the rank of $a$ is
locally constant and $A$ a transitive Lie algebroid if $a$
is surjective. The  Lie algebroid homomorphism and isomorphism
is defined in \cite{Mck-2005}. And we often use the Atiyah exact
sequence
$
 0\xrightarrow{} L \xrightarrow{j} A \xrightarrow{a}TM \xrightarrow{} 0
$
 to denote a transitive Lie algebroid. Here $L=\kernel a$
 is called the adjoint bundle. Sometimes we use $(A,M,\{bullet\},a)$
 to note Lie algebroid in order to highlight the bracket.
All transitive Lie algebroids(isomorphic class) and
homomorphisms between them form a category that is fundamental
in our considerations.
\begin{example}(See \cite{Mck-2005}) The followings are
important examples of transitive Lie algebroid.
\begin{enumerate}
  \item Let $M$ be a manifold and let $\g$ be a Lie algebra.
  On $TM\bigoplus(M\times\g)$ define $a:TM\bigoplus(M\times\g)\rightarrow TM $
  by $a:(X,\mu)\mapsto X$. And a bracket
  $$\{(X,\mu),(Y,\nu)\}=([X,Y],X(\nu)-Y(\nu)+[\mu,\nu]).$$
  for $(X,\mu),(Y,\nu)\in\Gamma(TM\bigoplus(M\times\g);M)$.

  Then $TM\bigoplus(M\times\g)$ is a transitive Lie algebroid
  on $M$, called the trivial Lie algebroid on $M$ with
  structural Lie algebra $\g$.

  \item Let $L$ be a Lie algebra bundle on smooth manifold $M$.
  The Lie algebroid $\D_{Der}(L)$ of covariant derivatives
  on $\Gamma^{\infty}(L)$ is a transitive Lie algebroid on $M$.

  \item The Lie algebroid $\D(E)$ of covariant differential
  operators on the space of sections of vector bundle $E$.
\end{enumerate}
\end{example}

As vector space is commutative Lie algebra,
vector bundle $E$ is also commutative Lie algebra bundle.
Thus $\D(E)$ and $\D_{Der}(E)$ are identical in this case.
In the following part of this article we use $\g$ to note
Lie algebra and $\h$ to note  commutative Lie algebra.
All the Lie algebras we consider in this article are finite
dimensional.

\subsection{Functor $ \mathcal{TLA}_{\g}(\bullet) $}
In \cite{Mck-2005}, K. Mackenzie defines pullback of
transitive Lie algebroid over smooth map $f:M'\rightarrow M$.
It means that given a Lie algebra $\g$ there is the functor
$\mathcal{TLA}_{\g}(\bullet)$ such that with any manifold $M$
it assigns the family $\mathcal{TLA}_{\g}(M)$ of all
transitive Lie algebroid with structural Lie algebra $\g$.

\begin{lemma}(See \cite{Mck-2005}, page 248)\label{LAB}
Let $ 0\xrightarrow{} L \xrightarrow{j} A \xrightarrow{a}TM \xrightarrow{} 0$
be a transitive Lie algebroid on smooth manifold $M$.
Then $L$ is a Lie algebra bundle with respect to the braces
structure on $\Gamma(L;M)$  induced from the braces
on $\Gamma(A;M)$.
\end{lemma}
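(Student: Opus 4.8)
The plan is to proceed in two stages. First I would show that $\Gamma(L;M)$ is a $C^{\infty}(M)$-submodule of $\Gamma(A;M)$ closed under the bracket, on which the bracket is in addition $C^{\infty}(M)$-bilinear, so that it induces a field of Lie algebra brackets on the fibres $L_x$; then I would establish local triviality of this field of brackets, which is the substantive part.

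\emph{Fibrewise Lie algebra structure.} If $\alpha,\beta\in\Gamma(L;M)$ then $a(\alpha)=a(\beta)=0$, and since $a$ is a Lie algebra homomorphism $a(\{\alpha,\beta\})=[a(\alpha),a(\beta)]=0$, so $\{\alpha,\beta\}\in\Gamma(L;M)$. The Leibniz identity gives $\{\alpha,f\beta\}=f\{\alpha,\beta\}+a(\alpha)(f)\beta=f\{\alpha,\beta\}$ since $a(\alpha)=0$, and antisymmetry handles the first slot; hence the restricted bracket is $C^{\infty}(M)$-bilinear and is induced by a bundle map $L\otimes L\to L$, i.e. by a bracket $[\,\cdot\,,\cdot\,]_x$ on each $L_x$. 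Antisymmetry and the Jacobi identity for $\{\,\cdot\,,\cdot\,\}$ restrict to $\Gamma(L;M)$, and being pointwise statements they hold on each $L_x$; so $(L_x,[\,\cdot\,,\cdot\,]_x)$ is a Lie algebra and $L$ is at least a bundle of Lie algebras.

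\emph{Local triviality.} Choose any vector bundle splitting $\gamma\colon TM\to A$ of the anchor ($a\circ\gamma=\mathrm{id}$; it exists by a partition of unity) and for $X\in\rX^{1}(M)$, $\alpha\in\Gamma(L;M)$ set $\nabla_X\alpha:=\{\gamma(X),\alpha\}$. Using $a\circ\gamma=\mathrm{id}$, $a(\alpha)=0$ and the Leibniz rule one checks $\nabla_X\alpha\in\Gamma(L;M)$, $\nabla_{fX}\alpha=f\nabla_X\alpha$ and $\nabla_X(f\alpha)=f\nabla_X\alpha+X(f)\alpha$, so $\nabla$ is a linear connection on the vector bundle $L$; and the Jacobi identity in $\Gamma(A;M)$ reads exactly $\nabla_X[\alpha,\beta]=[\nabla_X\alpha,\beta]+[\alpha,\nabla_X\beta]$, i.e. $\nabla$ is a derivation of the fibrewise bracket. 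Hence parallel transport along any curve carries parallel sections to parallel sections and $[\alpha(t),\beta(t)]$ stays parallel, so the parallel transport maps $L_x\to L_y$ are Lie algebra isomorphisms. Over a coordinate ball $U$, fixing $x_0\in U$ and transporting along radial segments yields a smooth vector bundle trivialization $\psi_U\colon U\times L_{x_0}\to L|_U$ whose fibre maps are Lie algebra isomorphisms onto $(L_x,[\,\cdot\,,\cdot\,]_x)$; transition maps between two such charts are smooth and are fibrewise automorphisms of the model algebra. This is a Lie algebra bundle atlas, with structural algebra $\g:=L_{x_0}$ on each connected component of $M$.

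\emph{Main difficulty.} The first stage is routine; the real content is the second. It breaks into (a) producing a connection on $L$ that derives the bracket — supplied by the splitting $\gamma$ together with the algebroid Jacobi identity — and (b) the standard but not purely formal fact that a bracket-deriving connection has Lie-algebra-isomorphism parallel transport which, via radial transport, assembles into smooth local trivializations. It is worth noting that $\nabla$ depends on $\gamma$ and is generally not flat, but flatness is not needed here; alternatively one could quote local triviality of transitive Lie algebroids as Lie algebroids and read off the Lie algebra bundle structure of $L=\kernel a$, but the connection argument above is self-contained.
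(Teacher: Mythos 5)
Your proof is correct and is essentially the argument from the cited reference (Mackenzie), which the paper itself does not reproduce: the bracket on $\Gamma(L;M)$ is shown to be $C^{\infty}(M)$-bilinear hence fibrewise, and local triviality is obtained from the adjoint-type connection $\nabla_X\alpha=\{\gamma(X),\alpha\}$ induced by a splitting, whose parallel transport preserves the fibrewise bracket. No gaps.
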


\begin{lemma}(See \cite{Mck-2005}, page 100)\label{SubLie}
Let $A$ be a transitive Lie algebroid on $M$ and let
$U\subset M$ be an open subset. Then the braces
$\{,\}:\Gamma(A;M)\times \Gamma(A;M)\rightarrow \Gamma(A;M)$
restricted to
$\Gamma(A_{U};U)\times \Gamma(A_{U};U)\rightarrow \Gamma(A_{U};U)$
make $A_{U}$ be a Lie algebroid on $U$ called the
restriction of $A$ to $U$.
\end{lemma}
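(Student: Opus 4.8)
The plan is to exploit the fact that the Lie algebroid bracket is a \emph{local} operator, so that it automatically passes to sections over an open subset, the restricted anchor and bracket then inheriting all the axioms from those of $A$ over $M$. The only real work is a bump-function argument establishing locality, plus careful bookkeeping to transfer the identities; the underlying vector bundle $A_{U}$ with projection $p|_{U}$ is of course already a vector bundle over $U$.

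\textbf{Step 1 (locality of the bracket).} First I would prove: if $\alpha,\beta\in\Gamma(A;M)$ and $\alpha$ vanishes on an open set $V\subseteq M$, then $\{\alpha,\beta\}$ vanishes on $V$. Fix $x\in V$ and a bump function $\phi\in C^{\infty}(M)$ with $\phi(x)=1$ and $\operatorname{supp}\phi\subseteq V$; then $\phi\alpha=0$ identically (as $\{\phi\neq0\}\subseteq\operatorname{supp}\phi\subseteq V$ and $\alpha|_{V}=0$), so by the Leibniz identity and antisymmetry $0=\{\phi\alpha,\beta\}=\phi\{\alpha,\beta\}-a(\beta)(\phi)\cdot\alpha$, and evaluating at $x$ (where $\phi=1$ and $\alpha(x)=0$) gives $\{\alpha,\beta\}(x)=0$. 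By antisymmetry the same holds when $\beta$ vanishes on $V$. Combined with $\R$-bilinearity, this shows $\{\alpha,\beta\}|_{V}$ depends only on $\alpha|_{V}$ and $\beta|_{V}$.

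\textbf{Step 2 (the restricted bracket and anchor).} For $\alpha,\beta\in\Gamma(A_{U};U)$ and $x\in U$, choose $\phi\in C^{\infty}(M)$ with $\phi\equiv1$ on a neighborhood $W$ of $x$ and $\operatorname{supp}\phi\subseteq U$. Then $\phi\alpha$ and $\phi\beta$, extended by $0$, lie in $\Gamma(A;M)$, and I set $\{\alpha,\beta\}_{U}:=\{\phi\alpha,\phi\beta\}$ on $W$. By Step 1 this depends neither on $\phi$ nor on $W$, and the locally defined sections agree on overlaps, so they glue to a well-defined $\{\alpha,\beta\}_{U}\in\Gamma(A_{U};U)$. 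Since $TM|_{U}=TU$, the restricted anchor $a_{U}:=a|_{A_{U}}\colon A_{U}\to TU$ is a bundle map, and it is surjective because $a$ is, so $A_{U}$ will in fact be transitive. It remains to check the axioms: $\R$-bilinearity and antisymmetry of $\{\,,\,\}_{U}$ are immediate from those of $\{\,,\,\}$ and the construction; for the Jacobi identity, the Leibniz identity, and the fact that $a_{U}$ induces a Lie algebra homomorphism $\Gamma(A_{U};U)\to\rX^{1}(U)$, it suffices to verify each identity near an arbitrary point $x\in U$. There one substitutes $\phi\gamma$ for every section $\gamma$ occurring (with one common $\phi\equiv1$ near $x$, supported in $U$), uses Step 1 to replace each restricted bracket on $W$ by the corresponding global bracket of the $\phi\gamma$'s, invokes the already known identity for $A$ over $M$, and then restricts back to $W$; since $x$ is arbitrary the identity holds on all of $U$.

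\textbf{Main obstacle.} The delicate point is precisely this transfer argument. One must check carefully that the germ-wise definition in Step 2 is independent of all choices and patches consistently, and — most subtly, for the \emph{iterated} brackets in the Jacobi identity — that a repeated application of locality legitimately identifies, on $W$, the section $\{\phi\alpha,\phi\{\phi\beta,\phi\gamma\}_{U}\}_{U}$ with the restriction of the genuinely global bracket $\{\phi\alpha,\{\phi\beta,\phi\gamma\}\}$, so that the Jacobi identity for $A$ over $M$ applies verbatim and the cyclic sum vanishes. Once Step 1 is in hand everything else is formal. (This construction also realizes $A_{U}$ as the inverse-image Lie algebroid of $A$ along the inclusion $U\hookrightarrow M$.)
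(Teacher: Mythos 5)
Your argument is correct and complete: establishing locality of the bracket via the bump-function/Leibniz computation is exactly the standard mechanism by which the bracket descends to sections over an open subset, and your care with the iterated brackets in the Jacobi identity addresses the only genuinely delicate point. The paper itself gives no proof of this lemma --- it is quoted directly from \cite{Mck-2005} --- so there is nothing to compare against; yours is the standard argument underlying that reference.
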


\begin{lemma}(See \cite{Mck-2005}, page 317)\label{Localtri}
Consider a transitive Lie algebroid
$ 0\xrightarrow{} L \xrightarrow{j} A \xrightarrow{a}TM \xrightarrow{} 0$
on $M$ with fixed structural Lie algebra $\g$. Given any open covering
$\{U_{\alpha}\}$ of $M$ by contractible sets,
for arbitrary $\alpha$, there is an Lie algebroid isomorphism
$$
S_{\alpha}:TU_{\alpha}\bigoplus(U_{\alpha}\times \g)\rightarrow A_{U_{\alpha}}
$$
where $TU_{\alpha}\bigoplus(U_{\alpha}\times \g)$ is trivial Lie
algebroid on $U_{\alpha}$.
\end{lemma}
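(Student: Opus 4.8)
The strategy is to bring $A_{U_\alpha}$ into the standard local normal form of a transitive Lie algebroid and then to remove its curvature term; the whole point is that over a contractible base this removal can be carried out.

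First, by Lemma \ref{SubLie} the restriction $A_{U_\alpha}$ is a transitive Lie algebroid on $U_\alpha$, and by Lemma \ref{LAB} its adjoint bundle $L_\alpha:=\kernel a$ over $U_\alpha$ is a Lie algebra bundle with typical fibre $\g$. Since $U_\alpha$ is paracompact, the Atiyah sequence $0\to L_\alpha\to A_{U_\alpha}\xrightarrow{a}TU_\alpha\to 0$ admits a vector-bundle splitting $\gamma\colon TU_\alpha\to A_{U_\alpha}$. Using $\gamma$ to identify $A_{U_\alpha}\cong TU_\alpha\oplus L_\alpha$ as vector bundles, the bracket acquires the classical form
\[
\{(X,\mu),(Y,\nu)\}=\bigl([X,Y],\ \nabla_X\nu-\nabla_Y\mu+[\mu,\nu]_{L_\alpha}+\Omega(X,Y)\bigr),
\]
where $\nabla_X\mu:=\{\gamma(X),\mu\}$ is a connection on $L_\alpha$, $[\cdot,\cdot]_{L_\alpha}$ is the fibrewise bracket of Lemma \ref{LAB}, and $\Omega\in\Omega^{2}(U_\alpha;L_\alpha)$, $\Omega(X,Y):=\{\gamma X,\gamma Y\}-\gamma[X,Y]$, is the curvature of $\gamma$. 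The Lie algebroid axioms are equivalent to the conditions that each $\nabla_X$ be a derivation of $[\cdot,\cdot]_{L_\alpha}$, that $R^{\nabla}=\operatorname{ad}\circ\Omega$, and that the Bianchi identity $d^{\nabla}\Omega=0$ hold (cf. \cite{Mck-2005}).

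Second, I claim it suffices to choose the splitting $\gamma$ so that $\Omega\equiv 0$. Indeed, then $R^{\nabla}=0$, so $\nabla$ is a flat connection on $L_\alpha$ compatible with the bracket; since $U_\alpha$ is connected and simply connected, the $\nabla$-parallel sections of $L_\alpha$ form a Lie subalgebra $\h$ with $\dim\h=\dim\g$ on which evaluation at any point is a Lie algebra isomorphism onto $\g$, and the corresponding parallel frame is a Lie-algebra-bundle isomorphism $\psi\colon U_\alpha\times\g\xrightarrow{\ \sim\ }L_\alpha$ intertwining the trivial connection $d$ with $\nabla$ and the constant bracket of $\g$ with $[\cdot,\cdot]_{L_\alpha}$. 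A direct computation then shows that the vector-bundle isomorphism $S_\alpha\colon(X,\mu)\mapsto\gamma(X)+\psi(\mu)$ intertwines the anchors and carries the bracket of the trivial algebroid $TU_\alpha\oplus(U_\alpha\times\g)$ (namely $([X,Y],X(\nu)-Y(\mu)+[\mu,\nu])$) to the bracket displayed above with $\Omega=0$; hence $S_\alpha$ is the required Lie algebroid isomorphism.

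Finally, the crux is producing such a $\gamma$. Replacing $\gamma$ by $\gamma-\varphi$ for $\varphi\in\Omega^{1}(U_\alpha;L_\alpha)$ changes the pair $(\nabla,\Omega)$ into $\bigl(\nabla-\operatorname{ad}_\varphi,\ \Omega-d^{\nabla}\varphi+\tfrac12[\varphi\wedge\varphi]\bigr)$, so making $\Omega$ vanish amounts to solving the Maurer--Cartan-type equation $d^{\nabla}\varphi-\tfrac12[\varphi\wedge\varphi]=\Omega$ for $\varphi$. This is the only non-formal step, and it is exactly here that contractibility of $U_\alpha$ is used: the Bianchi identity $d^{\nabla}\Omega=0$ is precisely the integrability condition that makes this equation solvable over a contractible domain — on a convex chart one integrates an ordinary differential equation along radial rays, the nonlinear analogue of the Poincaré homotopy operator, which is how Mackenzie establishes local triviality \cite[p.~317]{Mck-2005}. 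Equivalently, the obstruction to the existence of a flat connection trivialising $A_{U_\alpha}$ is an extension class in a cohomology group of $U_\alpha$ that vanishes because $U_\alpha$ is contractible. I expect this solvability (equivalently, the vanishing of that obstruction) to be the main difficulty; the reduction steps above are essentially bookkeeping.
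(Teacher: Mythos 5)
The paper does not actually prove this lemma --- it is quoted from Mackenzie (\cite{Mck-2005}, p.~317) --- so your argument can only be measured against the standard proof, and yours follows that same route: reduce to finding a splitting $\gamma$ whose curvature form $\Omega$ vanishes, then trivialise the adjoint Lie algebra bundle by $\nabla$-parallel frames and match the brackets. Your first two paragraphs are complete and correct: the normal form of the bracket, the identities $R^{\nabla}=\mathrm{ad}\circ\Omega$ and $d^{\nabla}\Omega=0$ forced by Jacobi, the use of simple connectivity to kill the holonomy of the flat bracket-preserving connection, and the verification that $S_{\alpha}(X,\mu)=\gamma(X)+\psi(\mu)$ intertwines anchors and brackets (you even correct the typo $X(\nu)-Y(\nu)$ in the paper's statement of the trivial bracket).

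The gap is the third step, which you yourself flag: the existence of a curvature-free splitting, i.e.\ the solvability of $d^{\nabla}\varphi-\tfrac12[\varphi\wedge\varphi]=\Omega$, is asserted but not carried out, and this is the entire content of the lemma --- everything else is bookkeeping. Two concrete problems with the sketch as given. First, the integrability claim is imprecise: the Bianchi identity alone is not ``precisely'' the integrability condition; the radial (or homotopy) integration closes up only because of the two identities $R^{\nabla}=\mathrm{ad}\circ\Omega$ and $d^{\nabla}\Omega=0$ taken together, and the ``equivalently, an extension class vanishes'' remark misdescribes the obstruction theory (Mackenzie's class in $\mathcal{H}^{3}(M,\nabla,ZL)$ obstructs realising a coupling by \emph{some} extension, not flattening a connection on a \emph{given} algebroid). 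Second, integrating an ODE along radial rays requires a star-shaped domain, whereas the lemma is stated for an arbitrary covering by contractible sets; for a general contractible $U_{\alpha}$ one must instead pull the algebroid back along the contraction $h:U_{\alpha}\times[0,1]\rightarrow U_{\alpha}$ and perform parallel transport in the $[0,1]$-direction, or else first refine the cover to convex charts, which changes the statement being proved. Until the construction of $\varphi$ (equivalently, of the flat connection) is actually written out, what you have is a correct reduction plus a citation of the very fact to be established.
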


By using Lemma \ref{LAB}, Lemma \ref{SubLie}, Lemma \ref{Localtri} and
the method used in \cite{Hatcher-2005}, we get the following theorem.
\begin{theorem}
Let $M$ and $N$ be smooth manifolds. Given an arbitrary transitive Lie
algebroid $A$ on $N$. Let $f,g:M \rightarrow N$ are homotopic smooth maps. Then the pullback of $A$ over $f$ and $g$ are Lie algebroid isomorphic, that is $f^{!!}A\approx g^{!!}A$.
\end{theorem}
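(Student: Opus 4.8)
The goal is the classical "homotopy invariance" statement in the category of transitive Lie algebroids: homotopic smooth maps induce isomorphic pullbacks. The plan is to follow the template of the vector-bundle proof (as in Hatcher) adapted to the algebroid setting. First I would reduce to the universal case: it suffices to show that for a transitive Lie algebroid $A$ on $N\times[0,1]$, the restrictions $A_0:=i_0^{!!}A$ and $A_1:=i_1^{!!}A$ to $N\times\{0\}$ and $N\times\{1\}$ are isomorphic, where $i_t:N\hookrightarrow N\times[0,1]$ is the slice inclusion. Indeed, if $H:M\times[0,1]\to N$ is a smooth homotopy from $f$ to $g$, then $f^{!!}A=(H\circ i_0)^{!!}A\approx i_0^{!!}(H^{!!}A)$ and similarly for $g$, using functoriality of the pullback construction $(\bullet)^{!!}$ from \cite{Mck-2005}; so the theorem follows from the special case applied to $H^{!!}A$ on $M\times[0,1]$.

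\medskip

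\noindent\textbf{Local trivializations and the covering argument.} Next I would invoke Lemma \ref{Localtri} to choose, for the algebroid $A$ on $M\times[0,1]$, an open cover by contractible sets over which $A$ is isomorphic to a trivial Lie algebroid $TU\bigoplus(U\times\g)$. By a standard Lebesgue-number argument on the compact interval, for each point $x\in M$ there is a neighborhood $V_x$ and a partition $0=t_0<t_1<\cdots<t_k=1$ such that $A$ is trivializable over each $V_x\times[t_{i-1},t_i]$. One then first proves the statement locally: over a single slab $V\times[a,b]$ on which $A$ is trivial, the restrictions to $V\times\{a\}$ and $V\times\{b\}$ are isomorphic, and moreover one can produce an isomorphism $A_{V\times[a,b]}\approx (\mathrm{pr})^{!!}(A_{V\times\{b\}})$ with the projection $\mathrm{pr}:V\times[a,b]\to V$ — this is where triviality makes everything transparent, since a trivial algebroid is literally pulled back from a point (or from $V$). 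Composing across the finitely many slabs $[t_{i-1},t_i]$ gives a local isomorphism $A_{V\times[0,1]}\approx (\mathrm{pr})^{!!}(A_{V\times\{1\}})$ over each $V=V_x$.

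\medskip

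\noindent\textbf{Gluing the local isomorphisms.} The heart of the argument — and the step I expect to be the main obstacle — is globalizing these local isomorphisms to a single Lie algebroid isomorphism $A\approx (\mathrm{pr})^{!!}(A_{N\times\{1\}})$ (with $\mathrm{pr}:M\times[0,1]\to M$), which upon restriction to the two end slices yields $A_0\approx A_1$. In the vector-bundle case one patches the local isomorphisms using a partition of unity on $M$ and convexity of the space of isomorphisms; here one must patch morphisms of transitive Lie algebroids, which are required to be compatible with anchors, brackets and the adjoint bundle $L$ (Lemma \ref{LAB}). The bracket is not linear in a way that a naive convex combination respects, so the patching has to be done carefully: one orders a locally finite refinement $\{W_j\}$ of the cover, and builds the global isomorphism inductively over $W_1\cup\cdots\cup W_j$, each time extending by "switching on" the next chart using a bump function supported in $W_{j+1}$ — but crucially performing the interpolation in the interval direction rather than by averaging bundle maps, so that at each stage one genuinely composes algebroid isomorphisms rather than forming formal convex combinations. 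One verifies at each inductive step that the resulting bundle map is still a morphism of Lie algebroids: it automatically commutes with anchors (both project to $TM\times\{0\}\oplus T[0,1]$ appropriately), and the bracket condition is checked on sections, using the Leibniz identity of the Definition and the fact that the adjustment is the identity outside a compact "time window." The contractibility of the fibers $[t_{i-1},t_i]$ is what guarantees that each elementary move is an isomorphism, and local finiteness guarantees that the infinite composition stabilizes locally and hence defines a global isomorphism. Restricting this global isomorphism to $N\times\{0\}$ and $N\times\{1\}$ completes the proof.
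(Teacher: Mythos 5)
Your proposal is correct and follows essentially the same route the paper indicates: the paper gives no written proof, saying only that the theorem follows from Lemmas \ref{LAB}, \ref{SubLie}, \ref{Localtri} and ``the method used in \cite{Hatcher-2005}'', and your reduction to an algebroid over $M\times[0,1]$, the Lebesgue-number slab decomposition via Lemma \ref{Localtri}, and the Hatcher-style gluing by locally finite composition of slide isomorphisms (rather than convex combinations, which would not respect the bracket) is precisely that method adapted to transitive Lie algebroids.
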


Hence the functor $\mathcal{TLA}_{\g}(\bullet)$ is homotopy functor
for fixed structural Lie algebra $\g$. There exists a classifying space
$\cB_{\g}$ such that $\mathcal{TLA}_{\g}(M)$ has one to one
correspondence with the family of homotopy classes of continuous
maps $[M;\cB_{\g}]$. Here $\cB_{\g}$ is abstract and can be described
in more or less understandable way (see \cite{Mi-2011}).
\newpage
\section{Obstruction}
\subsection{Cohomology}
\begin{definition}(see \cite{Mck-2005}, page 107)
Let $A$ be an arbitrary Lie algebroid on a smooth manifold $M$ and $E$
is a vector bundle on $M$. Let $\D(E)$ be the Lie algebroid of
covariant derivative on $\Gamma^{\infty}(E)$. A representation
of $A$ on $E$ is a
Lie algebroid homomorphism
$$
\rho:A\rightarrow \D(E).
$$
\end{definition}

The cohomology space $\mathcal{H}^{n}(A,\rho,E),n\geq 0$ can be
defined when the representation $\rho$ is given(see \cite{Mck-2005},
page 260). When $A$ is $TM$, we denote the representation
by $\nabla:TM\rightarrow E$. Then there is
$\mathcal{H}^{n}(M,\nabla,E),n\geq 0$. The representation
$\nabla:TM\rightarrow E$ can be regard as a flat connection on
$E$(see \cite{Mck-2005}, page 109, page 186 ). Due to Lemma 1.1.6
and Lemma 1.2.2 in \cite{Kub-91e}, the following theorem holds.

\begin{theorem}
Let $E$ be a vector bundle on smooth manifold $M$ and
$\nabla:TM\rightarrow E$ be a representation of $TM$ on $E$.
Let $f:M'\rightarrow M$ be a smooth map between smooth manifold
$M'$ and $M$. Let $E'=f^{*}E$ be the pullback of vector bundle
over $f$. Then
\begin{enumerate}\renewcommand{\labelenumi}{$($\roman{enumi}$)$}
\item the representation $\nabla$ induces a representation
of $TM'$ on $E'$ noted by $\nabla':TM'\rightarrow\D(E')$.
\item the map $f$ induces a homomorphism between cohomologies
$$
f^{*}:\mathcal{H}^{*}(M,\nabla,E)\rightarrow \mathcal{H}^{*}(M',\nabla',E'),
$$
where
$$\mathcal{H}^{*}(M,\nabla,E)=
\bigoplus\limits_{n=0}^{\infty}\mathcal{H}^{n}(M,\nabla,E),
\quad\mathcal{H}^{*}(M',\nabla',E')=
\bigoplus\limits_{n=0}^{\infty}\mathcal{H}^{n}(M',\nabla',E').$$
\end{enumerate}
\end{theorem}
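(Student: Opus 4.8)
The plan is to build everything from the de Rham–type description of Lie algebroid cohomology with coefficients. First I would recall that for the tangent algebroid $TM$ with a representation $\nabla$, the complex computing $\mathcal{H}^{n}(M,\nabla,E)$ is the complex of $E$-valued differential forms $\Omega^{n}(M;E)=\Gamma^{\infty}(\Lambda^{n}T^{*}M\otimes E)$ with differential
$$
(d_{\nabla}\omega)(X_{0},\dots,X_{n})=\sum_{i}(-1)^{i}\nabla_{X_{i}}\bigl(\omega(X_{0},\dots,\widehat{X_{i}},\dots,X_{n})\bigr)+\sum_{i<j}(-1)^{i+j}\omega([X_{i},X_{j}],X_{0},\dots,\widehat{X_{i}},\dots,\widehat{X_{j}},\dots,X_{n}),
$$
which is the standard identification quoted from \cite{Mck-2005}. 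For part (i), I would take the pullback bundle $E'=f^{*}E$ with its canonical bundle map $\tilde f\colon E'\to E$ covering $f$, and define $\nabla'$ by the usual pullback-connection formula: for $X'\in\Gamma^{\infty}(TM')$ and a section of $E'$ written locally as $\sum \phi_{i}\,(f^{*}e_{i})$ with $e_{i}$ a local frame of $E$, set $\nabla'_{X'}(f^{*}e_{i})=\sum_{k}(f^{*}\theta_{ik})(X')\,f^{*}e_{k}$ where $\theta_{ik}$ are the connection forms of $\nabla$, extended by the Leibniz rule. One then checks this is independent of the frame (the transition-function compatibility is exactly the gluing condition for $E'$) and is $C^{\infty}(M')$-linear in $X'$ and a covariant derivative in the $E'$-argument, hence is a representation $TM'\to\mathcal{D}(E')$; moreover $\nabla'$ is flat because its curvature is $f^{*}$ of the curvature of $\nabla$, which vanishes. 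This is precisely the content of Lemma 1.1.6 and Lemma 1.2.2 of \cite{Kub-91e}, which the excerpt permits me to invoke, so I would keep this step brief.

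For part (ii) the plan is to produce an explicit chain map $f^{\sharp}\colon\Omega^{\bullet}(M;E)\to\Omega^{\bullet}(M';E')$ and pass to cohomology. Given $\omega\in\Omega^{n}(M;E)$, define
$$
(f^{\sharp}\omega)_{x'}(v'_{1},\dots,v'_{n})=\tilde f^{-1}_{x'}\bigl(\omega_{f(x')}(df_{x'}v'_{1},\dots,df_{x'}v'_{n})\bigr),
$$
where $\tilde f_{x'}\colon E'_{x'}\xrightarrow{\ \sim\ }E_{f(x')}$ is the canonical fibre isomorphism; equivalently $f^{\sharp}$ is the composite of the ordinary pullback of $E$-valued forms with the tautological identification $f^{*}(\Lambda^{n}T^{*}M\otimes E)=\Lambda^{n}T^{*}M'\otimes\cdots$ followed by the projection onto $\Lambda^{n}T^{*}M'\otimes E'$. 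The key computation is the commutation relation $f^{\sharp}\circ d_{\nabla}=d_{\nabla'}\circ f^{\sharp}$. This follows termwise from the defining formula above: the connection terms transform correctly because $\nabla'$ was defined precisely so that $\tilde f\circ\nabla'_{X'}=\nabla_{df(X')}\circ\tilde f$ holds along $f$, and the bracket terms transform correctly because $f$ is a Lie algebroid morphism $TM'\to TM$, i.e. $df[X',Y']$ and $[df X',df Y']$ agree as sections along $f$. Having the chain map, $f^{*}\colon\mathcal{H}^{*}(M,\nabla,E)\to\mathcal{H}^{*}(M',\nabla',E')$ is the induced map on cohomology, and it respects the grading, giving the displayed direct-sum formulation automatically.

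The main obstacle, and the only place where care is genuinely needed, is the verification that $\nabla'$ is well defined as a flat representation — i.e. part (i) done honestly rather than quoted — since the pullback $f^{*}E$ is only defined up to the cocycle data and one must check the connection forms glue; and then the companion point that $f$ really is a morphism of Lie algebroids $TM'\to TM$ in Mackenzie's sense, so that the base-path/bracket compatibility needed for $f^{\sharp}d_{\nabla}=d_{\nabla'}f^{\sharp}$ is available. Both are handled by Lemmas 1.1.6 and 1.2.2 of \cite{Kub-91e}, so in the write-up I would state the pullback connection explicitly, cite those lemmas for well-definedness and flatness, then devote the bulk of the argument to the chain-map identity, which is a direct (if slightly lengthy) unwinding of the two Koszul-type formulas and needs no further input. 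Functoriality $(g\circ f)^{*}=f^{*}\circ g^{*}$ and $(\mathrm{id})^{*}=\mathrm{id}$, while not asked for, drop out of the same explicit formula and could be remarked on in one line.
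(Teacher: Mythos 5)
Your proposal is correct and follows the same route the paper intends: the paper offers no proof of its own beyond citing Lemmas 1.1.6 and 1.2.2 of \cite{Kub-91e}, which are exactly the two facts you invoke for the well-definedness and flatness of the pullback representation. Your explicit chain map $f^{\sharp}$ on the $E$-valued form complexes and the verification $f^{\sharp}\circ d_{\nabla}=d_{\nabla'}\circ f^{\sharp}$ is the standard argument those citations encapsulate, so your write-up would simply make explicit what the paper leaves to the reference.
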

From fundamental differential geometry, the following theorem holds.
\begin{theorem}
Let $E$ be a commutative Lie algebra bundle with fiber $\h$.
Let $\nabla$ be a flat connection on it. Then $\nabla$  induces
the system of transition functions $\{\varphi_{\alpha\beta}\}$
for $E$ that are locally constant.
Then $E$ can be seen as vector bundle with discrete structural
group $\Aut(\h)_{d}$, and denoted by $E^{\nabla}\rightarrow M$.
Here $\Aut(\h)_{d}$ is the group of all automorphisms of $\h$,
that is $\Aut(\h)$, with discrete topology.
\end{theorem}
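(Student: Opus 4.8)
The plan is to cover $M$ by the contractible charts supplied by Lemma~\ref{Localtri} (or any contractible open cover), to trivialize $E$ over each of them by a frame of $\nabla$-parallel sections, and then to check that the resulting transition functions are locally constant.

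I would first fix $x\in M$ and a contractible, hence simply connected, open set $U_{\alpha}\ni x$. Because $\nabla$ is flat, its curvature vanishes, so the horizontal distribution on the total space of $E$ is involutive and, by the Frobenius theorem, integrable; equivalently, parallel transport along a path in $U_{\alpha}$ is invariant under homotopies of the path with fixed endpoints, hence depends only on the endpoints. Transporting a fixed basis of the fibre $E_{x}$ then yields well-defined sections $e^{\alpha}_{1},\dots,e^{\alpha}_{n}\in\Gamma(E_{U_{\alpha}};U_{\alpha})$, $n=\dim\h$, with $\nabla e^{\alpha}_{i}=0$; at every point of $U_{\alpha}$ these are the image of a basis under the invertible parallel-transport isomorphism, so they form a frame and define a trivialization $\Phi_{\alpha}\colon E_{U_{\alpha}}\xrightarrow{\ \approx\ }U_{\alpha}\times\h$.

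Next, on a nonempty overlap $U_{\alpha}\cap U_{\beta}$ I would express one parallel frame through the other, $e^{\beta}_{j}=\sum_{i}(\varphi_{\alpha\beta})_{ij}\,e^{\alpha}_{i}$, which by definition is the transition function $\varphi_{\alpha\beta}\colon U_{\alpha}\cap U_{\beta}\to \mathrm{GL}(\h)$. Applying $\nabla$ and using $\nabla e^{\alpha}_{i}=\nabla e^{\beta}_{j}=0$ gives $\sum_{i}d(\varphi_{\alpha\beta})_{ij}\otimes e^{\alpha}_{i}=0$; since $\{e^{\alpha}_{i}\}$ is a frame this forces $d\varphi_{\alpha\beta}=0$, so $\varphi_{\alpha\beta}$ is constant on each connected component of $U_{\alpha}\cap U_{\beta}$, i.e. locally constant. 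As $\h$ is commutative its bracket is identically zero, so every linear automorphism of $\h$ is a Lie-algebra automorphism, i.e. $\mathrm{GL}(\h)=\Aut(\h)$; hence each $\varphi_{\alpha\beta}$ takes values in $\Aut(\h)$, and being locally constant it is continuous into $\Aut(\h)_{d}$. Nothing has to be checked about compatibility with the bracket, because the bracket is zero and parallel transport, together with the $\Phi_{\alpha}$, is then automatically a fibrewise Lie-algebra isomorphism.

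Finally, the atlas $\{(U_{\alpha},\Phi_{\alpha})\}$ together with the cocycle $\{\varphi_{\alpha\beta}\}$ exhibits $E$ as a vector bundle whose structure group is reduced to $\Aut(\h)_{d}$; this flat bundle is what is denoted $E^{\nabla}\to M$. I expect the only genuine difficulty to be the first step --- the existence of local parallel frames --- which is precisely where flatness $R^{\nabla}=0$ is used, through Frobenius integrability of the horizontal distribution (equivalently, through homotopy invariance of parallel transport on simply connected sets); the remainder is the elementary differentiation above. Since contractible trivializing neighbourhoods are already available from Lemma~\ref{Localtri} and $\nabla$ is, by the preceding discussion, a flat connection, no input beyond standard differential geometry is needed.
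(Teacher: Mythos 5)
Your proof is correct, and it is the standard argument the paper is implicitly invoking: the theorem is stated there without proof (``from fundamental differential geometry''), and the same computation reappears inside the paper's proof of Lemma~\ref{Correctness}, where charts $\varphi_{\alpha}$ with $\varphi_{\alpha}(\nabla_{X}\mu)=X(\varphi_{\alpha}(\mu))$ are simply asserted to exist and the locally constant transition functions are deduced from commutation with $X$. You supply exactly the step the paper takes for granted --- the existence of local parallel frames on simply connected charts via flatness and path-independence of parallel transport --- and your differentiation of $e^{\beta}_{j}=\sum_{i}(\varphi_{\alpha\beta})_{ij}e^{\alpha}_{i}$ to get $d\varphi_{\alpha\beta}=0$ is equivalent to the paper's computation; the observation that $\mathrm{GL}(\h)=\Aut(\h)$ for commutative $\h$ correctly disposes of the Lie-algebra-bundle compatibility.
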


\subsection{Obstruction class}
Let $L$ ba a Lie algebra bundle on smooth manifold $M$ with fiber $\g$. There is a commutative diagram(see \cite{Mck-2005}).
$$
\xymatrix{
&0\ar[d]               &0\ar[d]     \\
&ZL\ar[r]^{=}\ar[d]^{i}&ZL\ar[d]^{i}\\
&L\ar[r]^{=}\ar[d]^{ad} &L\ar[d]^{ad}\\
0\ar[r]&\Der(L)\ar[r]^{j}\ar[d]^{\natural^{0}}&\cD_{Der}(L)\ar[r]^{a}\ar[d]^{\natural}&TM\ar[r]\ar[d]^{=}&0\\
0\ar[r]&\Out_{Der}(L)\ar[r]^{\bar j}\ar[d]&\Out\cD_{Der}(L)\ar[r]^{\bar a}\ar[d]&TM\ar[r]\ar[d]&0\\
&0&0&0
}
$$
in which both rows and columns are exact.

Consider a coupling $\Xi:TM\rightarrow \Out\D_{Der}(L)$, that is the curvature tensor
$$
R^{\Xi}:\Lambda^{2} (TM)\rightarrow \Out_{Der}(L)
$$
defined by
$$
R^{\Xi}(X,Y)=[\Xi(X),\Xi(Y)]-\Xi([X,Y])
$$
for $X,Y\in\rX^{1}(M)$ is zero.

There is a lifting $\nabla_{\Xi}:TM\rightarrow \D_{Der}(L)$ of the coupling $\Xi$:
$$
\xymatrix{
L\ar[d]^{ad}\\
\cD_{Der}(L)\ar[d]^{\natural}&TM\ar[l]_{\nabla_{\Xi}}
\ar[d]^{=}\\
\Out\cD_{Der}(L)\ar[d]&TM\ar[d]\ar[l]_{\Xi}\\
0&0
}
$$
in which $\nabla$ is vector bundle map.

Then for curvature tensor
$R^{\nabla_{\Xi}}:\Lambda^{2} (TM)\rightarrow \Der(L)$
defined by $R^{\nabla_{\Xi}}(X,Y)=
[\nabla_{\Xi}(X),\nabla_{\Xi}(Y)]-
\nabla_{\Xi}([X,Y])$, the following diagram is commutative.
$$
\xymatrix{
L\ar[d]^{ad}\\
\Der(L)\ar[d]^{\natural^{0}}&\Lambda^{2}(TM)
\ar[l]_{R^{\nabla_{\Xi}}}\ar@/_.5pc/[ld]_{0}
\\
\Out_{Der}(L)\ar[d]&\\
0&
}
$$

Since vertical column is exact there is a lifting of
$R^{\nabla_{\Xi}}$ that is a bundle map $\Omega:\Lambda^{2} (TM)\rightarrow L$ such that the diagram
\begin{equation}\label{lifting}
\xymatrix{
L\ar[d]^{ad}\\
\Der(L)\ar[d]^{\natural^{0}}&\Lambda^{2}(TM)\ar[l]_{R^{\nabla_{\Xi}}}\ar@/_.5pc/[ld]_{0}
\ar@/_.5pc/[lu]_{\Omega}
\\
\Out_{Der}(L)\ar[d]&\\
0&
}
\end{equation}
is commutative.

Define $d^{\nabla}:\Gamma(\Omega^{n}(M,L);M)\rightarrow \Gamma(\Omega^{n+1}(M,L);M)$ by
$$
\begin{array}{ll}
d^{\nabla}f(X_{1},X_{2},...,X_{n+1})=\sum\limits_{i=1}^{n+1}(-1)^{i+1}\nabla(X_{i})(f(X_{1},X_{2},...,\hat{X_{i}},...,X_{n+1})\\
~~~~~~~~~~~~~~~~~ +\sum\limits_{i<j}(-1)^{i+j}f([X_{i},X_{j}],X_{1},...,\hat{X_{i}},...,\hat{X_{j}},...,X_{n+1})
\end{array}
$$
here $f\in\Gamma(\Omega^{n}(M,L);M)$ and
$X_{1},X_{2},...,X_{n+1}\in\rX^{1}$.

For $\Omega$ in diagram (\ref{lifting}),
$d^{\nabla_{\Xi}}\Omega\in\Omega^{3}(M,ZL)$ and
$d^{\nabla^{ZL}_{\Xi}}(d^{\nabla})=0$ where
$\nabla^{ZL}_{\Xi}$ is induced by $\nabla_{\Xi}$ (see \cite{Mck-2005}).
Then define $Obs(\nabla_{\Xi})=[d^{\nabla_{\Xi}}(\Omega)]
\in\mathcal{H}^{3}(M,\nabla^{ZL}_{\Xi},ZL)$.
The connection $\nabla^{ZL}_{\Xi}$ and
cohomology class $Obs(\nabla_{\Xi})$ depend only on $\Xi$
(see \cite{Mck-2005}, page 273 and Theorem 7.2.12).
Then the class $Obs(\nabla_{\Xi})$ is called the
\emph{obstruction class} of the coupling $\Xi$,
and is denoted by $Obs(\Xi)$.

\begin{theorem}(\textbf{The functorial property})
Let $L$ ba a finite dimensional Lie algebra bundle
on a smooth manifold $M$. Let $M'$ be a smooth manifold
and $f:M'\rightarrow M$ is smooth map. Let $L'=f^{*}L$ be
the pullback of Lie algebra bundle over $f$.
Consider a coupling $\Xi:TM\rightarrow \Out\D_{Der}L$.
Then $\Xi$ induces a coupling $\Xi':TM'\rightarrow \Out\D_{Der}L'$ and $f$ induces a homomorphism
$$
f^{*}:\mathcal{H}^{*}(M,\Xi,ZL)\rightarrow \mathcal{H}^{*}(M',\Xi',ZL').
$$

Further more the obstruction class
$Obs(\Xi')\in\mathcal{H}^{3}(M',\Xi',ZL')$ satisfies the condition
$$
f^{*}(Obs(\Xi))=Obs(\Xi')
$$
\end{theorem}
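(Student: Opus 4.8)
The plan is to unwind the construction of $Obs(\Xi)$ step by step and check that each choice made over $M$ pulls back, via $f$, to a legitimate choice of the same kind over $M'$, so that naturality is inherited at every stage. First I would establish the input data. Since $L' = f^{*}L$, there is a canonical bundle map $L' \to L$ covering $f$, and this induces compatible maps $\Der(L') \to \Der(L)$, $\cD_{Der}(L') \to \cD_{Der}(L)$, $\Out_{Der}(L') \to \Out_{Der}(L)$, and $ZL' \to ZL$; moreover $\cD_{Der}(L')$ sits in the pullback square with $TM'$ in the sense that a section of $\cD_{Der}(L')$ over an open set is, locally, a pair consisting of a covariant derivative operator on $\Gamma(L')$ and its symbol in $\rX^{1}(M')$. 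Given the coupling $\Xi : TM \to \Out\cD_{Der}(L)$, one obtains $\Xi' : TM' \to \Out\cD_{Der}(L')$ by composing the induced $TM' \to f^{*}TM \to f^{*}\Out\cD_{Der}(L) = \Out\cD_{Der}(f^{*}L) = \Out\cD_{Der}(L')$ (using that $\Out\cD_{Der}(-)$ commutes with pullback of Lie algebra bundles). The vanishing of the curvature tensor $R^{\Xi'}$ is immediate since $R^{\Xi'}$ is the $f$-pullback of $R^{\Xi} = 0$.

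Next I would handle the lifting $\nabla_{\Xi}$. A lifting $\nabla_{\Xi} : TM \to \cD_{Der}(L)$ of $\Xi$ always exists because $\natural$ is a surjective bundle map (the column is exact); similarly a lifting $\nabla_{\Xi'}$ over $M'$ exists. One does \emph{not} expect $\nabla_{\Xi'}$ to equal the naive $f$-pullback of $\nabla_{\Xi}$ — that pullback is only a $TM'$-map into $f^{*}\cD_{Der}(L)$, not into $\cD_{Der}(L')$ as a differential operator — but any two liftings of $\Xi'$ differ by a bundle map $TM' \to \Der(L')$, and the key point (from \cite{Mck-2005}, page 273 and Theorem 7.2.12, which is quoted in the excerpt) is that $\nabla^{ZL}_{\Xi'}$ and the class $Obs(\Xi')$ depend only on $\Xi'$, not on the lift. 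So it suffices to exhibit \emph{one} convenient lift over $M'$ and compute with it. The natural choice: work locally. Over a chart $U$ where $L|_{U}\cong U\times\g$, $\Xi$ is given by a form $\chi_{U}\in\Omega^{1}(U,\Out_{Der}(\g))$ with a chosen lift $\nabla_{U}\in\Omega^{1}(U,\Der(\g))$ and $\Omega_{U}\in\Omega^{2}(U,\g)$; over $f^{-1}(U)$ one simply takes $f^{*}\chi_{U}$, $f^{*}\nabla_{U}$, $f^{*}\Omega_{U}$. These local pieces are compatible on overlaps because $f^{*}$ respects the transition cocycles, and they glue (after a partition-of-unity argument, exactly as in the standard construction) to a global lift $\nabla_{\Xi'}$, a global $\Omega'$, and a global $ZL'$-valued $3$-form which by construction is $f^{*}$ of the corresponding datum over $M$.

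The cohomology map $f^{*} : \mathcal{H}^{*}(M,\Xi,ZL)\to \mathcal{H}^{*}(M',\Xi',ZL')$ is then the map induced on $\nabla^{ZL}$-de Rham cohomology by the pullback of forms, whose existence I would justify exactly as in the previous theorem (the analogue of Lemma 1.1.6 and Lemma 1.2.2 of \cite{Kub-91e} applied to the flat $ZL$-connection $\nabla^{ZL}_{\Xi}$ — note the column $0\to ZL\to L\to\cD_{Der}(L)\to\cdots$ shows $\nabla^{ZL}_{\Xi}$ is the restriction of $\nabla_{\Xi}$ to the center, and it is flat because $R^{\nabla_{\Xi}}$ lands in $\Der(L)$ and its image in $ZL$-coefficient cochains is closed). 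The identity $d^{\nabla_{\Xi'}}\Omega' = f^{*}\big(d^{\nabla_{\Xi}}\Omega\big)$ holds term-by-term from the explicit Chevalley–Eilenberg-type formula for $d^{\nabla}$ in the excerpt, since $\nabla_{\Xi'}(f^{*}X) $ acts on pulled-back sections as $f^{*}(\nabla_{\Xi}(X)(\cdot))$ and brackets of vector fields are $f$-related on the relevant local pieces. Passing to cohomology classes gives $Obs(\Xi') = [d^{\nabla_{\Xi'}}\Omega'] = [f^{*}(d^{\nabla_{\Xi}}\Omega)] = f^{*}Obs(\Xi)$.

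The main obstacle I anticipate is precisely the mismatch flagged above: pullback of a \emph{connection-type} object (an $\cD_{Der}$-valued $1$-form, i.e.\ a family of first-order differential operators) along a smooth $f$ is not the naive pullback of sections, and when $f$ is not a submersion or immersion one must be careful that the local lifts $f^{*}\nabla_{U}$ really do assemble into a covariant-derivative operator on $\Gamma(L')$ with the correct symbol. The clean way around this is to never pull back $\nabla_{\Xi}$ itself but only the $\Out_{Der}$-valued and $\g$-valued \emph{tensorial} data ($\chi_U$, $\Omega_U$, the cocycles), use the excerpt's cited invariance of $Obs(\Xi)$ under the choice of lift to reduce to the locally-constructed lift over $M'$, and only then invoke functoriality of ordinary pullback of tensor-valued forms. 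Everything else is bookkeeping with the commuting diagram and the explicit formula for $d^{\nabla}$.
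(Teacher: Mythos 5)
The paper states this theorem without any proof, so there is no in-paper argument to compare yours against; what follows is an assessment of your proposal on its own terms. Your strategy --- pull back every ingredient of the construction ($\Xi$, the lift $\nabla_{\Xi}$, the lift $\Omega$), observe that curvature, the lifting diagrams and $d^{\nabla}$ all commute with pullback, and then use the quoted fact that $Obs$ is independent of the choice of lift to legitimize computing $Obs(\Xi')$ with the pulled-back choices --- is the right one and, suitably completed, does prove the theorem. Two points deserve tightening. First, the ``main obstacle'' you flag (pulling back a connection-type object along a map that need not be a submersion) has a cleaner resolution than local gluing: the pullback Lie algebroid satisfies $f^{!!}\D_{Der}(L)\cong\D_{Der}(f^{*}L)$ as the fibred product $TM'\times_{TM}\D_{Der}(L)$, so the canonical lift of $\Xi'$ is simply $\nabla_{\Xi'}(X')=(X',\nabla_{\Xi}(Tf(X')))$, acting on generators of $\Gamma(f^{*}L)$ by $\nabla_{\Xi'}(X')(h\cdot(\mu\circ f))=X'(h)\cdot(\mu\circ f)+h\cdot(\nabla_{\Xi,Tf(X')}(\mu)\circ f)$ --- exactly the formula the paper itself uses for pulling back splittings in the proof of Lemma \ref{Correctness}. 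With this global lift one gets $R^{\nabla_{\Xi'}}=f^{*}R^{\nabla_{\Xi}}$ on the nose, $\Omega'=f^{*}\Omega$ is a legitimate lift of it through $ad$, and $d^{\nabla_{\Xi'}}(f^{*}\Omega)=f^{*}(d^{\nabla_{\Xi}}\Omega)$ by the standard naturality of the exterior covariant derivative under pullback.

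Second, the route you actually describe --- gluing local pullback data with a partition of unity --- produces \emph{some} lift $\nabla'$ of $\Xi'$ which is not literally the pullback of the global $\nabla_{\Xi}$, so the resulting $3$-form is not literally $f^{*}(d^{\nabla_{\Xi}}\Omega)$; you then need one further application of the independence-of-lift theorem to identify the two cohomology classes, a step your write-up gestures at but does not cleanly isolate. This is not a fatal gap, but it is the one place where your argument as written conflates ``equal as forms'' with ``equal in cohomology''; adopting the canonical fibred-product lift removes the issue entirely. Your identification $f^{!!}\Out\D_{Der}(L)\cong\Out\D_{Der}(L')$ and the flatness $R^{\Xi'}=f^{*}R^{\Xi}=0$ are correct but stated as vector-bundle pullbacks where a short remark about the Lie algebroid pullback $f^{!!}$ would be more accurate.
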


\begin{definition}
An extension of $TM$ by Lie algebra bundle $L$ is an exact sequence of Lie algebroid over $M$
$$
 0\xrightarrow{} L \xrightarrow{j} A \xrightarrow{a}TM \xrightarrow{} 0.
$$
\end{definition}
\begin{theorem}(see \cite{Mck-2005}, corollary 7.3.9)
Let $L$ be a Lie algebra bundle on $M$. Let  $\Xi:TM\rightarrow \Out\D_{Der}(L)$ be a coupling. Then , if $Obs(\Xi)=0$, there is a Lie algebroid extension
$$
0\xrightarrow{} L \xrightarrow{j} A \xrightarrow{a}TM \xrightarrow{} 0
$$
of $TM$ by $L$ inducing the coupling $\Xi$.
\end{theorem}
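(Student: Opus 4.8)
The plan is to reverse the construction of $Obs(\Xi)$ and build an extension directly out of the data $(\nabla_{\Xi},\Omega)$ once the obstruction vanishes. The hypothesis $Obs(\Xi)=[d^{\nabla_{\Xi}}\Omega]=0$ in $\mathcal{H}^{3}(M,\nabla^{ZL}_{\Xi},ZL)$ means there is a form $\theta\in\Omega^{2}(M,ZL)$ with $d^{\nabla_{\Xi}}\Omega=d^{\nabla^{ZL}_{\Xi}}\theta$; since $\theta$ takes values in $ZL$ the two covariant exterior derivatives agree on it and $ad\circ\theta=0$. I set $\widetilde\Omega:=\Omega-\theta$. Then $\widetilde\Omega$ is still a lift of $R^{\nabla_{\Xi}}$ in the sense of diagram (\ref{lifting}), i.e. $ad\circ\widetilde\Omega=R^{\nabla_{\Xi}}$, and in addition $d^{\nabla_{\Xi}}\widetilde\Omega=0$. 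These two identities are precisely the integrability conditions needed below.

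First I would build the candidate algebroid as a vector bundle: put $A:=TM\oplus L$, let $a:A\to TM$ be the projection and $j:L\to A$ the inclusion, so that $0\to L\xrightarrow{j}A\xrightarrow{a}TM\to 0$ is exact as vector bundles. On sections define
$$
\{(X,\mu),(Y,\nu)\}:=\bigl([X,Y],\ \nabla_{\Xi}(X)(\nu)-\nabla_{\Xi}(Y)(\mu)+[\mu,\nu]_{L}+\widetilde\Omega(X,Y)\bigr),
$$
where $[\,,\,]_{L}$ is the fibrewise bracket of the Lie algebra bundle $L$ (Lemma \ref{LAB}). It is immediate that this bracket is $\R$-bilinear, antisymmetric, and satisfies the Leibniz identity relative to $a$, because each $\nabla_{\Xi}(X)$ is a covariant derivative on $\Gamma(L)$ and $a$ is a projection. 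The heart of the verification is the Jacobi identity: expanding $\{(X,\mu),\{(Y,\nu),(Z,\sigma)\}\}$ cyclically, the $TM$-components cancel by Jacobi in $\rX^{1}(M)$; the $\Gamma(L)$-components split into three groups — one vanishing because $\nabla_{\Xi}(X)$ is a derivation of $[\,,\,]_{L}$ (true since $\nabla_{\Xi}$ lands in $\D_{Der}(L)$), one equal to the cyclic sum of $\bigl(R^{\nabla_{\Xi}}(X,Y)-ad\,\widetilde\Omega(X,Y)\bigr)(\sigma)$, which vanishes by $ad\circ\widetilde\Omega=R^{\nabla_{\Xi}}$, and one equal to $(d^{\nabla_{\Xi}}\widetilde\Omega)(X,Y,Z)$, which vanishes by $d^{\nabla_{\Xi}}\widetilde\Omega=0$. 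Hence $\{\,,\,\}$ is a Lie bracket and $(A,a,\{\,,\,\})$ is a transitive Lie algebroid.

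It then remains to identify the adjoint bundle and the induced coupling. By construction the bracket of two sections of $L=\kernel a$ is $[\,,\,]_{L}$, so the Lie algebra bundle structure on $\kernel a$ furnished by Lemma \ref{LAB} is exactly the given $L$; thus $A$ is an extension of $TM$ by $L$. The coupling of this extension is computed with the obvious splitting $\lambda:TM\to A$, $\lambda(X)=(X,0)$: one forms the adjoint action $X\mapsto\{\lambda(X),\,\cdot\,\}|_{\Gamma(L)}\in\D_{Der}(L)$ and projects by $\natural$ to $\Out\D_{Der}(L)$. The chosen splitting realizes this adjoint action as the map $\nabla_{\Xi}$, and since $\natural\circ\nabla_{\Xi}=\Xi$ by the very definition of the lift $\nabla_{\Xi}$, the coupling induced by $A$ is $\Xi$. (A different choice of $\theta$ changes $\widetilde\Omega$ by a $d^{\nabla^{ZL}_{\Xi}}$-closed $ZL$-valued $2$-form, and a different lift $\nabla_{\Xi}$ of $\Xi$ changes the data by an $L$-valued $1$-form; both modifications alter $A$ only up to Lie algebroid isomorphism over $TM$, which is more than enough for an existence statement.)

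The main obstacle is the bookkeeping inside the Jacobi identity: one must expand the nested bracket carefully and check that the $\Gamma(L)$-valued remainder reorganizes \emph{exactly} into the combination of the curvature identity $R^{\nabla_{\Xi}}=ad\circ\widetilde\Omega$ and the closedness $d^{\nabla_{\Xi}}\widetilde\Omega=0$, with no leftover terms. A secondary subtlety requiring care is that $\nabla_{\Xi}$ is in general \emph{not} flat as a connection on $L$ — its curvature is the inner derivation $ad\,\widetilde\Omega$ — so one cannot simplify using flatness of $\nabla_{\Xi}$ itself; flatness is available only after passing to the centre for the correction $\theta$, which is exactly why $\theta$ is forced to be $ZL$-valued and why the obstruction naturally lives in $\mathcal{H}^{3}(M,\nabla^{ZL}_{\Xi},ZL)$.
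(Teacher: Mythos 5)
The paper gives no proof of this theorem --- it is quoted verbatim from Mackenzie (Corollary 7.3.9) --- so there is nothing in the text to compare against; your argument is correct and is essentially the standard construction from the cited source. Using $Obs(\Xi)=0$ to replace $\Omega$ by $\widetilde\Omega=\Omega-\theta$ with $ad\circ\widetilde\Omega=R^{\nabla_{\Xi}}$ and $d^{\nabla_{\Xi}}\widetilde\Omega=0$, and then defining the twisted bracket on $TM\oplus L$, is exactly right: those two identities are precisely what makes the $\Gamma(L)$-component of the cyclic Jacobi sum vanish, and the splitting $\lambda(X)=(X,0)$ visibly recovers the coupling $\Xi$ via $\natural\circ\nabla_{\Xi}=\Xi$.
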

\begin{corollary}
Let $E$ be a vector bundle over $M$ (that is the Lie algebra bundle with
commutative Lie algebra). There is a Lie algebroid extension
$$
0\xrightarrow{} E \xrightarrow{j} A \xrightarrow{a}TM \xrightarrow{} 0
$$
if and only if the bundle $E$ is flat.
\end{corollary}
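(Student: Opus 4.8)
\quad The plan is to specialise the coupling and obstruction constructions above to the abelian case and to match them with the classical notion of a flat connection. First I would record what the diagram preceding~(\ref{lifting}) becomes when $L=E$ is a vector bundle, i.e.\ an abelian Lie algebra bundle: then $ZL=E$, $\D_{Der}(E)=\D(E)$ (as already remarked after the Example), and the structural map $ad\colon E\to\Der(E)$ is zero since the bracket on $E$ is trivial. Because $ad=0$, exactness of the columns of that diagram forces $\natural^{0}\colon\Der(E)\to\Out_{Der}(E)$ and $\natural\colon\D_{Der}(E)\to\Out\D_{Der}(E)$ to be isomorphisms, so $\Out_{Der}(E)=\Der(E)=\mathrm{End}(E)$ and $\Out\D_{Der}(E)=\D(E)$. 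Under these identifications a coupling $\Xi\colon TM\to\Out\D_{Der}(E)$ is exactly a connection $\nabla_{\Xi}\colon TM\to\D(E)$ whose curvature $R^{\nabla_{\Xi}}(X,Y)=[\nabla_{\Xi}(X),\nabla_{\Xi}(Y)]-\nabla_{\Xi}([X,Y])$, read in $\Out_{Der}(E)=\mathrm{End}(E)$, vanishes --- that is, a \emph{flat} connection; and then in diagram~(\ref{lifting}) one may take $\nabla_{\Xi}$ itself as the lift of $\Xi$, so $R^{\nabla_{\Xi}}=0$, the lift $\Omega=0$ is admissible, $d^{\nabla_{\Xi}}\Omega=0$, and $Obs(\Xi)=0$ automatically.

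For the ``if'' implication, assume $E$ is flat and fix a flat connection $\nabla$ on $E$. By the paragraph above it determines a coupling $\Xi$ with $Obs(\Xi)=0$, and the preceding theorem (\cite{Mck-2005}, Corollary~7.3.9) yields a Lie algebroid extension $0\xrightarrow{}E\xrightarrow{}A\xrightarrow{}TM\xrightarrow{}0$ inducing $\Xi$.

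For the ``only if'' implication, suppose an extension $0\xrightarrow{}E\xrightarrow{j}A\xrightarrow{a}TM\xrightarrow{}0$ is given, and choose a vector bundle splitting $\sigma\colon TM\to A$ of the anchor (it exists since $M$ is a manifold). The bracket of $A$ restricts to an action of $A$ on the ideal $E=\kernel a$: for $\xi\in\Gamma(A)$ and $s\in\Gamma(E)$ one has $a\{\xi,s\}=[a\xi,as]=0$, hence $\{\xi,s\}\in\Gamma(E)$, and by the Leibniz identity $\xi\mapsto\{\xi,\cdot\}$ defines a representation $\rho\colon A\to\D(E)$, a Lie algebroid homomorphism covering $a$. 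Put $\nabla:=\rho\circ\sigma\colon TM\to\D(E)$, a connection on $E$. Since $\rho$ preserves brackets, $R^{\nabla}(X,Y)=[\rho(\sigma X),\rho(\sigma Y)]-\rho(\sigma[X,Y])=\rho\bigl([\sigma X,\sigma Y]-\sigma[X,Y]\bigr)$, and $[\sigma X,\sigma Y]-\sigma[X,Y]\in\Gamma(E)$ because its image under $a$ is $[X,Y]-[X,Y]=0$; as the restriction of $\rho$ to $\Gamma(E)$ is the adjoint action of the abelian Lie algebra bundle $E$ on itself, it vanishes. Therefore $R^{\nabla}=0$ and $E$ is flat.

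The only genuine work is the bookkeeping of the first paragraph: confirming that in the abelian case the ``inner'' and ``outer'' derivation terms collapse as stated, so that couplings on a vector bundle are precisely flat connections and their obstruction is automatically zero. After that both implications are immediate; the subtlest point in the converse is the identity $R^{\nabla}(X,Y)=\rho([\sigma X,\sigma Y]-\sigma[X,Y])$, which rests on $\rho$ being bracket-preserving on all of $\Gamma(A)$ --- a standard property of the representation of a Lie algebroid on an ideal.
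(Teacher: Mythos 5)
Your argument is correct and follows essentially the same route as the paper: the ``only if'' direction is the same computation $R^{\nabla}(X,Y)=\{[\lambda(X),\lambda(Y)]-\lambda([X,Y]),\,\cdot\,\}=0$ using a splitting and the commutativity of the fibre, and the ``if'' direction is the same appeal to $Obs=0$ together with Mackenzie's Corollary~7.3.9. Your opening paragraph merely makes explicit (correctly) the identifications $\Out\D_{Der}(E)=\D(E)$ and $\Out_{Der}(E)=\mathrm{End}(E)$ in the abelian case, which the paper leaves implicit when it asserts $Obs(\nabla)=0$.
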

\begin{proof}
Suppose that the extension exists
$$
0\xrightarrow{} E \xrightarrow{j} A \xrightarrow{a}TM \xrightarrow{} 0
$$
Let $\lambda:TM\rightarrow A$ be a splitting. Define
$$
\nabla^{\lambda}:\rX^{1}(M)\times\Gamma^{\infty}(E;M)
\rightarrow \Gamma^{\infty}(E;M)
$$
by the formula
$$
\nabla^{\lambda}_{X}(\mu)=\{\lambda(X),\mu\}.
$$
Then
$$
\begin{array}{ll}
R^{\nabla^{\lambda}}(X,Y)(\mu)=[\nabla^{\lambda}_{X},\nabla^{\lambda}_{Y}](\mu)-\nabla^{\lambda}_{[X,Y]}(\mu)=\\
~~~~~~~~~~~~~~~~~~~=\{[\lambda(X),\lambda(Y)]-\lambda([X,Y]),\mu\}=0
\end{array}
$$
for arbitrary $X,Y\in\rX^{1}(M),\mu\in\Gamma(E;M)$ since
$a([\lambda(X),\lambda(Y)]-\lambda([X,Y]))=0$ that is
$[\lambda(X),\lambda(Y)]-\lambda([X,Y])\in \Gamma(E;M)$
 and the structural Lie
algebra is commutative.

Conversely. If $E$ is flat, there is a flat connection
$\nabla$ on $E$ which also is a representation of the Lie algebroids
$$
\nabla:TM\rightarrow \D(E),
$$
that is $R^{\nabla}(X,Y)=0$.

By definition of obstruction class this means that $Obs(\nabla)=0\in \mathcal{H}^{3}(M,\nabla,E)$. Then there exist Lie algebroid extensions.
\end{proof}
\newpage
\section{Characteristic Classes}
In this section a system of characteristic classes of transitive Lie
algebroid with commutative adjoint bundle will be described. Then
they will be compared with characteristic classes derived
from Chern-Weil homomorphism by J.Kubarski (\cite{Kub-91e}). 
As a matter of fact we show that the Chern-Weil homomorphism 
does not cover all characteristic classes from categorical 
point of view.

\subsection{A system of characteristic classes for commutative case}
Let $\h$ be a finite dimensional commutative Lie algebra.
Let $\Aut(\h)_{d}$ be the group $\Aut(\h)$ with discrete topology.
The functor $Vector_{d}^{\h}(\bullet)$ associates with each
paracompact topology space $X$ the set $Vector_{d}^{\h}(X)$ of
all vector bundle with structural group $\Aut(\h)_{d}$.
Let $E^{\infty}\rightarrow B_{\Aut(\h)_{d}}$ be
universal bundle with group $\Aut(\h)_{d}$ and let $B_{\Aut(\h)_{d}}$
be the classifying space.

\begin{lemma}(See \cite{Dale-1993}, Definition 11.1, Theorem 11.2, Theorem 12.2)\label{lemma 3.1.1}\label{final space}
There is a bijection
between $Vector_{d}^{\h}(X)$ and the homotopy classes
of continuous maps $[X;B_{\Aut(\h)_{d}}]$.
\end{lemma}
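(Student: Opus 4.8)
The plan is to run the standard two-step argument: first reduce a vector bundle with the discrete structural group $\Aut(\h)_{d}$ to a principal $\Aut(\h)_{d}$-bundle, and then apply the classical (Milnor--Dold) classification theorem for numerable principal bundles, which is precisely the content of the cited Definition 11.1, Theorem 11.2 and Theorem 12.2.

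\emph{Step 1: reduction to principal bundles.} Write $G=\Aut(\h)_{d}$ and let $\mathrm{Prin}_{G}(X)$ be the set of isomorphism classes of principal $G$-bundles over $X$. By definition an object of $Vector_{d}^{\h}(X)$ is a vector bundle with fiber $\h$ presented, relative to some open cover $\{U_{\alpha}\}$, by transition functions $\{g_{\alpha\beta}:U_{\alpha}\cap U_{\beta}\to G\}$ satisfying the cocycle identity (automatically locally constant, since $G$ is discrete). The same cocycle $\{g_{\alpha\beta}\}$ defines a principal $G$-bundle $P\to X$ by gluing the charts $U_{\alpha}\times G$ via left translation, and conversely $P$ recovers the vector bundle as the associated bundle $P\times_{G}\h$. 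I would check that these two assignments are mutually inverse on isomorphism classes: the decisive point is that $G$ acts faithfully on $\h$ (it is literally a subgroup of $GL(\h)$), so that a bundle morphism between the associated $\h$-bundles covering $\mathrm{id}_{X}$ is fibrewise an element of $G$, hence descends from and is determined by a unique morphism of the underlying principal bundles. This yields a bijection $Vector_{d}^{\h}(X)\approx\mathrm{Prin}_{G}(X)$, natural in $X$.

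\emph{Step 2: classification of principal $G$-bundles.} For the topological group $G$ the Milnor join construction gives a numerable universal principal $G$-bundle $E^{\infty}\to B_{G}=B_{\Aut(\h)_{d}}$ with $E^{\infty}$ contractible. Since $X$ is paracompact, every principal $G$-bundle over $X$ is numerable, so the pullback rule $[f]\mapsto[f^{*}E^{\infty}]$ is a well-defined map $[X;B_{G}]\to\mathrm{Prin}_{G}(X)$; surjectivity is the existence of a classifying map for any numerable bundle, and injectivity follows from the covering-homotopy theorem (homotopic maps have isomorphic pullbacks, and an isomorphism of pullbacks produces a homotopy between the corresponding classifying maps). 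Composing with Step 1 gives the asserted bijection $Vector_{d}^{\h}(X)\approx[X;B_{\Aut(\h)_{d}}]$, natural in $X$ — which is the form needed for the later discussion of characteristic classes as natural transformations.

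\emph{Expected main obstacle.} The only genuinely substantive point is Step 1: one must verify that passing to the associated bundle $P\mapsto P\times_{G}\h$ neither loses nor identifies information, i.e. that the groupoid of principal $G$-bundles embeds as a \emph{full} subgroupoid of the groupoid of $\h$-vector bundles. This is exactly where the faithfulness of the $G$-action on $\h$ enters; once it is in place, Step 2 is a direct appeal to the standard classification theorem for numerable principal bundles over a paracompact base.
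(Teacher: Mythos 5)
Your proposal is correct and follows exactly the route the paper intends: the paper gives no proof of its own but simply cites Husemoller's classification theory (reduction to principal $\Aut(\h)_{d}$-bundles plus the Milnor--Dold classification of numerable principal bundles over a paracompact base), which is precisely your two-step argument. The only thing you add beyond the citation is the explicit check of faithfulness in Step 1, which is a reasonable and correct filling-in of the standard details.
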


Let $M$ be a smooth manifold and
\begin{equation}
0\xrightarrow{} E \xrightarrow{j} A \xrightarrow{a}TM \xrightarrow{} 0
\end{equation}
be a transitive Lie algebroid with fixed structural commutative Lie
algebra $\h=R^{n}$. Let $\lambda:TM\rightarrow A$ be a splitting.
Define $\nabla=\nabla^{\lambda}$ by a formula
$\nabla^{\lambda}_{X}(\mu)=\{\lambda(X),\mu\}$.
The bundle  $E$ possesses a flat structure
$E^{\nabla}\in Vector_{d}^{\h}(M)$. Let $f:M'\rightarrow M$
be a smooth map and $f^{!!}A$ be the pullback of Lie algebroid $A$ over $f$, that is
\begin{equation}\label{PullbackTLA}
0\xrightarrow{} f^{*}E \xrightarrow{j'} f^{!!}A \xrightarrow{a'}TM' \xrightarrow{} 0.
\end{equation}
Let $\lambda':TM'\rightarrow f^{!!}A$ be a splitting.
Define $\nabla^{'}=\nabla^{\lambda'}$ on $f^{*}E$ and $f^{*}E$ is
corresponding to $(f^{*}E)^{\nabla'}$.

\begin{lemma}\label{Correctness}
\begin{enumerate}\renewcommand{\labelenumi}{$($\roman{enumi}$)$}
\item $\nabla$ and $\nabla'$ are independent of the choice
of $\lambda$ and $\lambda'$,
\item The bundle $(f^{*}E)^{\nabla'}$ is the pullback of $E^{\nabla}$ over $f:M'\rightarrow M$ in the category of vector bundle with discrete structural group $\Aut(\h)_{d}$.
\end{enumerate}
\end{lemma}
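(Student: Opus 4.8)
The plan is to handle the two assertions separately: (i) reduces to a one-line bracket computation using that two splittings differ by a bundle map into the adjoint bundle, whose bracket is zero because $\h$ is commutative; (ii) reduces, via (i), to identifying $\nabla'$ with the ordinary pullback connection $f^{*}\nabla$ and then observing that the flat-bundle structure and its pullback are both recorded by locally constant transition cocycles.

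For (i): let $\lambda_{1},\lambda_{2}:TM\to A$ be two splittings of the Atiyah exact sequence. Their difference $\phi=\lambda_{1}-\lambda_{2}$ satisfies $a\circ\phi=0$, so $\phi$ is a vector bundle morphism $TM\to E=\kernel a$. For $X\in\rX^{1}(M)$ and $\mu\in\Gamma(E;M)$ we then get
$$\nabla^{\lambda_{1}}_{X}(\mu)-\nabla^{\lambda_{2}}_{X}(\mu)=\{\lambda_{1}(X)-\lambda_{2}(X),\mu\}=\{\phi(X),\mu\}.$$
Since $a$ is a Lie algebra homomorphism, $\{\phi(X),\mu\}\in\Gamma(E;M)$, and by Lemma \ref{LAB} this bracket is the bracket of two sections in the Lie algebra bundle $E$, whose fibre $\h=\R^{n}$ is commutative; hence it vanishes. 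Thus $\nabla^{\lambda_{1}}=\nabla^{\lambda_{2}}$. Applying the identical argument to the pullback algebroid $f^{!!}A$ and its adjoint bundle $f^{*}E$, whose fibre is again $\h$, gives the independence of $\nabla'$ from $\lambda'$.

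For (ii): by (i) I am free to compute $\nabla'$ from any convenient splitting of $f^{!!}A$. Using Mackenzie's description of $f^{!!}A$ as the fibre product of $TM'$ and $A$ over $TM$, a splitting $\lambda$ of $A$ induces the splitting $\lambda'(v)=(v,\lambda(df(v)))$ of $f^{!!}A$. Unwinding the defining properties of the pullback bracket — that it restricts correctly on pullback sections and is extended by the Leibniz identity — one checks that $\nabla'_{X'}(f^{*}\mu)$ agrees, at each point $p'\in M'$, with $\nabla_{df(X'_{p'})}\mu$ regarded as an element of $E_{f(p')}=(f^{*}E)_{p'}$; since both sides are $C^{\infty}(M')$-linear in $X'$, this pins down $\nabla'$ as the pullback connection $f^{*}\nabla$ on $f^{*}E$, which is in particular flat. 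Now choose a $\nabla$-flat local frame $(e_{1},\dots,e_{n})$ of $E$ over $U\subset M$, with associated locally constant transition functions $\varphi_{\alpha\beta}\in\Aut(\h)$. Then $(f^{*}e_{1},\dots,f^{*}e_{n})$ is an $f^{*}\nabla$-flat frame of $f^{*}E$ over $f^{-1}(U)$, so the transition cocycle of $(f^{*}E)^{\nabla'}$ is $\{\varphi_{\alpha\beta}\circ f\}$, which is locally constant because $f$ is continuous; this cocycle is, by definition, the one representing the pullback of $E^{\nabla}$ in $Vector_{d}^{\h}(\bullet)$. Hence $(f^{*}E)^{\nabla'}=f^{*}(E^{\nabla})$ as vector bundles with discrete structural group $\Aut(\h)_{d}$.

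I expect the main obstacle to be the middle step of part (ii): carefully matching the connection $\nabla'$ produced by the pullback-algebroid bracket with the classical pullback connection $f^{*}\nabla$. This means unpacking Mackenzie's construction of $f^{!!}A$ and its bracket, dealing with the fact that a general section of $f^{*}E$ is only locally a combination of pulled-back sections and a general vector field on $M'$ is not $f$-related to one on $M$, and reducing the asserted identity to a pointwise statement by exploiting tensoriality in the $TM'$-slot. By contrast, part (i), the flatness of $f^{*}\nabla$, and the transition-cocycle bookkeeping in the last step are routine.
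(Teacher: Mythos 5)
Your proof is correct and follows essentially the same route as the paper: the same induced splitting $\lambda'(X')=(X',\lambda(Tf(X')))$, the same computation showing $\nabla'_{X'}(f^{*}\mu)=\nabla_{Tf(X')}(\mu)\circ f$, and the same identification of the pullback's locally constant cocycle as $\{\varphi_{\alpha\beta}\circ f\}$, merely phrased via flat frames instead of the chart isomorphisms $\psi_{\alpha}$. Your part (i) — writing $\lambda_{1}-\lambda_{2}$ as a bundle map into $E=\kernel a$ and killing $\{\phi(X),\mu\}$ by commutativity of the fibre $\h$ — correctly supplies the argument the paper dismisses as ``obvious.''
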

\begin{proof}
Statement $(i)$ is obvious.

$(ii):$ Consider the splitting of transitive Lie algebroid (\ref{PullbackTLA})
$$
\lambda':TM'\rightarrow f^{!!}A
$$
by the formula
$$
\lambda'(X')=(X',\lambda(Tf(X'))),
$$
$X'\in TM'$.

Let $\sum\limits_{i}h_{i}\cdot(\mu_{i}\circ f)\in\Gamma(f^{*}E;M)$, here $h_{i}\in C^{\infty}(M'),\mu_{i}\in\Gamma(E;M)$. Then
\begin{equation}
\nabla^{\lambda'}_{X'}(\sum\limits_{i}h_{i}\cdot(\mu_{i}\circ f))=\sum\limits_{i}X'(h_{i})\cdot(\mu_{i}\circ f)+\sum\limits_{i}h_{i}\cdot(\nabla^{\lambda}_{Tf(X')}(\mu_{i})\circ f)
\end{equation}

As $\nabla^{\lambda}$ is flat connection,  there exist chart
$\{\varphi_{\alpha}:E_{U_{\alpha}}\rightarrow
U_{\alpha}\times \h \}_{\alpha\in\Delta}$  which satisfies the condition
\begin{equation}
\varphi_{\alpha}(\nabla^{\lambda}_{X}(\mu_{\alpha}))=
X(\varphi_{\alpha}(\mu_{\alpha}))
\end{equation}
for arbitrary $\mu_{\alpha}\in\Gamma(E_{U_{\alpha}};U_{\alpha})$, $X\in\rX^{1}(M)$.

Consider $\mu\in\Gamma(E_{U_{\alpha}\cap U_{\beta}};U_{\alpha}\cap U_{\beta})$. Then
$$
X(\varphi_{\beta}\circ\varphi_{\alpha}^{-1}\circ \varphi_{\alpha}(\mu))=
X(\varphi_{\beta}(\mu))=\varphi_{\beta}(\nabla^{\lambda}_{X}(\mu))=
\varphi_{\beta}\circ\varphi_{\alpha}^{-1}(\varphi_{\alpha}(\nabla^{\lambda}_{X}(\mu))).
$$
Then
\begin{equation}
X(\varphi_{\beta}\circ\varphi_{\alpha}^{-1}\circ\varphi_{\alpha}(\mu))=
\varphi_{\beta}\circ\varphi_{\alpha}^{-1}(X(\varphi_{\alpha}(\mu)))
\end{equation}

Thus the transition functions
$\{\varphi_{\alpha\beta}\}_{\alpha,\beta\in\Delta}$
are all locally constant.

Let $\{V_{\alpha}^{'}=f^{-1}(U_{\alpha})\}_{\alpha\in\Delta}$
be atlas of charts on $M'$.
Define the homomorphism of $C^{\infty}(V_{\alpha}')-$modules
$$
\psi_{\alpha}:\Gamma(f^{*}E|_{V_{\alpha}'};V_{\alpha}')\rightarrow \Gamma(V_{\alpha}'\times \h;V_{\alpha}')
$$
defined by the formula
$$
\psi_{\alpha}(h_{\alpha,i}\cdot(\mu_{\alpha}^{i}\circ f))=
h_{\alpha,i}\cdot\varphi_{\alpha}(\mu_{\alpha}^{i})\circ f
$$
for $h_{\alpha,i}\cdot(\mu_{\alpha}^{i}\circ f)
\in\Gamma(f^{*}E|_{V_{\alpha}'};V_{\alpha}'),$
where $h_{\alpha,i}\in C^{\infty}(V_{\alpha}'),
\mu_{\alpha}^{i}\in\Gamma(E_{U_{\alpha}};U_{\alpha})$.

As $\varphi_{\alpha}$ is vector bundle isomorphism,
$\psi_{\alpha}$ induces a vector bundle isomorphism.
Then $\{V^{'}_{\alpha},\psi_{\alpha}:f^{*}E|_{V_{\alpha}'}
\rightarrow V_{\alpha}'\times \h\}_{\alpha\in\Delta}$
is a chart for $f^{*}E$. Consider a vector field  $X'\in\rX^{1}(M')$.
Then
$$
\begin{array}{ll}
\psi_{\alpha}(\nabla^{\lambda'}_{X'}(h_{\alpha,i}\cdot(\mu_{\alpha}^{i}\circ f)))\\
=\psi_{\alpha}(X'(h_{\alpha,i})\cdot (\mu_{\alpha}^{i}\circ f)+h_{\alpha,i}\cdot(\nabla^{\lambda}_{Tf(X')}(\mu_{\alpha}^{i})\circ f))=\\
=X'(h_{\alpha,i})\cdot (\varphi_{\alpha}(\mu_{\alpha}^{i})\circ f)+h_{\alpha,i}\cdot (Tf(X')(\varphi_{\alpha}(\mu_{\alpha}^{i}))\circ f)\\
=X'(h_{\alpha,i}\cdot (\varphi_{\alpha}(\mu_{\alpha}^{i}))\circ f)=X'(\psi_{\alpha}(h_{\alpha,i}\cdot(\mu_{\alpha}^{i}\circ f))))
\end{array}
$$

The transition functions
$$
\psi_{\alpha\beta}:V_{\alpha}'\cap V_{\beta}'\rightarrow \Aut(\h)_{d}
$$
are defined by
$$
\psi_{\alpha\beta}(x')=\varphi_{\alpha\beta}(f(x'))
$$
for $x'\in V_{\alpha}\cap V_{\beta}$.

So $(f^{*}E)^{\nabla'}$ is the pullback of $E^{\nabla}$ over $f:M'\rightarrow M$ in the category of vector bundle with discrete structural group $\Aut(\h)_{d}$.
\end{proof}
The Lemma \ref{Correctness} shows that the following definition is corrected.

\begin{definition}\label{CharClass}
Let $\h$ be a commutative Lie algebra and $M$ be a smooth manifold.
Let $A\in \mathcal{TLA}_{\h}(M)$, with splitting $\lambda$.
Let $E^{\nabla^{\lambda}}$ be the correspondent Lie algebra bundle
with flat structure.
Let $\theta:Vector_{d}^{\h}(M)\rightarrow [M;B_{\Aut(\h)_{d}}]$
be the bijection defined in Lemma \ref{final space}.
Then $\theta(E^{\nabla^{\lambda}})=[f]\in [M;B_{\Aut(\h)_{d}}]$
induces a homomorphism
$$
f^{*}:H^{*}(B_{\Aut(\h)_{d}};R)\rightarrow H^{*}(M;R).
$$
The class $f^{*}(c)\in H^{*}(M;R)$ is characteristic class of $A$,
for arbitrary $c\in H^{*}(B_{\Aut(\h)_{d}};R)$.
\end{definition}

\subsection{Chern-Weil homomorphism}

\begin{definition}(see \cite{Kub-91e}, page17)
Given a transitive Lie algebroid (A,q,M,\{,\},a) with adjoint bundle $L$.
The adjoint representation of a transitive Lie algebroid $A$ is
$$
ad:A\rightarrow \cD(L)
$$
defined by
$$
ad(\xi)(\nu)=\{\xi,\nu\}
$$
for $\xi\in\Gamma(A;M),\quad \nu\in\Gamma(L;M)$. Let $L^{*}$ be dual bundle of $L$ and $\bigvee^{k}L^{*}$ is $k-th$ symmetric power of $L^{*}$(see \cite{Greub-1967}, page 191). The adjoint representation $ad$ can rise to
$$\bigvee^{k}ad^{\natural}:A\rightarrow \cD(\bigvee^{k}L^{*})$$
such that
$$
\begin{array}{l}
<\bigvee^{k}ad^{\natural}(\xi)(\varphi),
\nu^{1}\vee\nu^{2}\vee...\vee\nu^{k}> = \\
= a(\xi)(<\varphi,\nu^{1}\vee\nu^{2}\vee...
\vee\nu^{k}>)-\sum\limits_{i=1}^{k}<\varphi,\nu^{1}\vee...
\vee\{\xi,\nu^{i}\}\vee...\vee\nu^{k}>
\end{array}
$$
for $\xi\in\Gamma(A;M),\varphi\in\Gamma(\bigvee^{k}L^{*};M), \nu^{i}\in\Gamma(L;M)$.
\end{definition}
\begin{remark}
Here we only consider the vector bundle structure of $L$ that is
commutative Lie algebra structure.
Hence we use notation $\D(L)$ and $\D(\bigvee^{k}L^{*})$.
\end{remark}

\begin{definition}(see \cite{Kub-91e}, Definition 2.3.1)
Given an arbitrary transitive Lie algebroid
$ 0\xrightarrow{} L\xrightarrow{} A\xrightarrow{a} TM\xrightarrow{} 0$.
Let $L^{*}$ be dual bundle of $L$.
A section $\varphi\in\Gamma(\bigvee^{k}L^{*};M)$
is called $\bigvee^{k}ad^{\natural}-$invariant
if $\bigvee^{k}ad^{\natural}(\xi)(\varphi)=0$
for all $\xi\in\Gamma(A;M)$. The space of all
$\bigvee^{k}ad^{\natural}-$invariant sections of
$\bigvee^{k}L^{*}$ is denoted by $\Gamma^{I}(\bigvee^{k}L^{*};M)$.

\end{definition}
\begin{definition}[\bf Chern-Weil homomorphism]
(see \cite{Kub-91e}, page 29)
Given a transitive Lie algebroid (A,q,M,\{,\},a)
with adjoint bundle $L$.
Let $\lambda:TM\rightarrow A$ be a splitting and
$R^{\lambda}\in\Omega^{2}(M;L)$ be the curvature tensor,
$R^{\lambda}(X,Y)=\{\lambda(X),\lambda(Y)\}-\lambda([X,Y])$.

Define a homomorphism of $C^{\infty}(M)-$modules
$$
\chi_{(A,\lambda),I}:\Gamma^{I}(\bigvee^{k}L^{*};M)\rightarrow \Omega^{2k}(M)
$$
by the formula
$$
\chi_{(A,\lambda),I}=\frac{1}{k!}<\varphi,R_{\lambda}\vee
R_{\lambda}\vee...\vee R_{\lambda}>
$$
for $\varphi\in\Gamma(\bigvee^{k}L^{*};M)$. Here
$$
\begin{array}{l}
<\varphi,R_{\lambda}\vee...\vee R_{\lambda}>(X_{1},X_{2},...,X_{2k})=\\\\
= <\varphi,\frac{1}{2^{k}}
\sum\limits_{\sigma}(-1)^{\sigma} R_{\lambda}(X_{\sigma(1)},X_{\sigma(2)})\vee R_{\lambda}(X_{\sigma(3)},X_{\sigma(4)})\vee...\vee R_{\lambda}(X_{\sigma(2k-1)},X_{\sigma(2k)})>
\end{array}
$$

The forms from the image of $\chi_{(A,\lambda),I}$ is closed
(see \cite{Kub-91e}, proposition 4.1.2).
Then Chern-Weil homomorphism is defined by the composition
$$
h_{(A,\lambda)}:\bigoplus\limits_{k\geq 0}\Gamma^{I}(\bigvee^{k}L^{*};M)\xrightarrow{\chi_{(A,\lambda),I}} \kernel d^{\nabla^{\lambda}} \xrightarrow{i} H^{*}_{DRam}(M;R).
$$
\end{definition}
The Chern-Weil homomorphism has functorial property and
is independent of the choice of splitting
(see \cite{Kub-91e}, theorem 4.2.2, theorem 4.3.7).
Then $h_{(A,\lambda)}$ can be denoted as
$$h_{A}:\bigoplus\limits_{k\geq 0}\Gamma^{I}(\bigvee^{k}L^{*};M)
\rightarrow H^{*}_{DRam}(M;R).$$

\subsection{Example}

The following example shows that the Chern-Weil homomorphism does not cover
all categorical characteristic classes.

Consider a flat $1$--dimensional vector bundle $E$ over a torus
$T^{2}=S^{1}\times S^{1}$. We will consider $E$ as a Lie algebra bundle
with commutative Lie algebra $\h\approx \R^{1}$. The structural group
of the bundle $E$ is the group $R^{*}=R\setminus\{0\}$
with discrete topology. The flat structure on $E$ is defined by an atlas
of charts $\{U_{\alpha}\}$ with trivialization of the bundle
$E$ on each chart $U_{\alpha}$ such that all transition function
are locally constant. Transition functions are fully defined by a
representation of the fundamental $\pi_{1}(T^{2})$ in the structural
group $\Aut(\h)_{d}$, $\rho:\pi_{1}(T^{2})\rightarrow \Aut(\h)_{d}$.

There is a flat connection $\nabla$ on $E\rightarrow T^{2}$ which
corresponds to the flat structure on $E$. This means that
the connection on each chart $U_{\alpha}$ (after trivialization
of the bundle $E$)
coincides with usual derivative ($\nabla_{X}=\frac{\partial}{\partial X}$).

Construct a Lie algebroid $\mathcal{A}:$
$$
0\rightarrow E\rightarrow T(T^{2})\bigoplus E\rightarrow T(T^{2})\rightarrow 0
$$
with bracket
$$
\{(X,\mu),(Y,\nu)\}=([X,Y],\nabla_{X}(\nu)-\nabla_{Y}(\mu)+\Omega(X,Y))
$$
for $(X,\mu),(Y,\nu)\in\Gamma(T(T^{2})\bigoplus E;T^{2})$.
Here $\Omega\in\kernel d^{\nabla}\subset\Omega^{2}(T^{2},E)$.
Let $E^{*}$ be the bundle dual to $E$.
Let $f\in\Gamma^{I}(E^{*};T^{2})$.
Then
\begin{equation}\label{invsection}
\begin{array}{l}
ad^{\natural}((X,\nu))(f)(\mu)=X(f(\mu))-f(\{X\oplus\nu,0\oplus\mu\})= \\
=X(f(\mu))-f(\nabla_{X}(\mu))=0
\end{array}
\end{equation}
for arbitrary $\mu\in \Gamma(E;T^{2}),\quad (X,\nu)\in\Gamma(T(T^{2})\oplus E;T^{2})$.
Hence locally on the chart $U_{\alpha}$ the function $f$ is constant.

This means that in the case of nontrivial representation 
the space $\Gamma^{I}(E^{*};T^{2})$ has only trivial element. Thus the characteristic class for $\mathcal{A}$ defined by Chern-Weil homomorphism by J.Kubarski is trivial.

On the other hand  the characteristic classes due to
definition \ref{CharClass} are not trivial.
Namely the structural group $\Aut(\h)_{d}$ is isomorphic to
$R\setminus \{0\}\approx\mathbb{Z}_{2}\times R$.

Hence the classifying space  for vector bundle with discrete
structural group $\h$ is  $B_{\mathbb{Z}_{2}}\times B_{R}$.
We have $B_{\mathbb{Z}_{2}}\sim \mathbb{R}\mathbb{P}^{\infty}$.
The group $R$ is a direct sum
$R\approx\bigoplus_{\alpha\in A}\mathbb{Q}_{\alpha}$ where each group
$\mathbb{Q}_{\alpha}$ is isomorphic to rational numbers,
$\mathbb{Q}_{\alpha}\approx\mathbb{Q}$. The group $\mathbb{Q}$ is isomorphic
to the direct limits
$\mathbb{Q}=\lim\limits_{\rightarrow}\left(\mathbb{Z}_{n}, \omega_{n}\right)$,
where all $\mathbb{Z}_{n}$ are isomorphic to $\mathbb{Z}$, and
$\omega_{n}:\mathbb{Z}_{n}\rightarrow \mathbb{Z}_{n+1}, \omega_{n}(k)=(n+1)k$.

Thus the classifying space $B_{R}$ can be represent as a direct limits

$$
B_{R}=\lim\limits_{\stackrel{\rightarrow}{b\subset B}}\mathbb{T}_{b},
$$
where each $b\in B$ is a finite collection of indexes
$$b=\{\alpha_{1},n_{1},\alpha_{2},n_{2},\dots, \alpha_{k}, n_{k}\},
\alpha_{j}\in A, n_{j}\in\mathbb{Z}$$
that are ordered in the natural way,
$\mathbb{T}_{b}=\prod\limits^{k}_{j=1}S^{1}_{\alpha_{j},n_{j}}\approx\mathbb{T}^{k}.$

The cohomology group $H^{*}(B_{\Aut(\h)_{d}};R)$ can be describe 
in the following way:
$$H^{*}(B_{\mathbb{Z}_{2}};R)\approx R;$$  
$$
H^{*}(B_{R};R)\approx \lim\limits_{\stackrel{\leftarrow}{b\subset B}}
H^{*}(\mathbb{T}_{b};R).
$$
The representation $\rho:\pi_{1}(T^{2})\rightarrow \Aut(\h)_{d}$ induces 
the map
$$
B_{\rho}:\mathbb{T}_{2}\rightarrow B_{\mathbb{Z}_{2}}\times B_{R},
$$
and the homomorphism in cohomology
$$
B_{\rho}^{*}:H^{*}(B_{\mathbb{Z}_{2}}\times B_{R};R)\rightarrow
H^{*}(\mathbb{T}_{2};R).
$$

\begin{lemma}[Key lemma]
The homomorphism $B_{\rho}^{*}$ is surjective.
\end{lemma}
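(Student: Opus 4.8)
The plan is to recognise every space in the statement as an Eilenberg--MacLane space of type $K(\pi,1)$, so that $B_\rho^*$ becomes a homomorphism of group cohomology rings, and then to reduce surjectivity to degree one. Concretely, $\mathbb{T}_2=T^2=S^1\times S^1$ is a $K(\mathbb{Z}^2,1)$; the classifying spaces $B_{\mathbb{Z}_2}$ and $B_R$ of the discrete groups $\mathbb{Z}_2$ and $R$ are a $K(\mathbb{Z}_2,1)$ and a $K(R,1)$, so $B_{\mathbb{Z}_2}\times B_R$ is a $K(\mathbb{Z}_2\times R,1)\simeq K(\Aut(\h)_d,1)$; and $B_\rho$ is, up to homotopy, the unique map of $K(\pi,1)$'s inducing the homomorphism $\rho:\pi_1(T^2)=\mathbb{Z}^2\to\mathbb{Z}_2\times R$, so that $B_\rho^*$ is the group-cohomology map $\rho^*:H^*(\mathbb{Z}_2\times R;\R)\to H^*(\mathbb{Z}^2;\R)$. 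Since $H^*(\mathbb{Z}^2;\R)=H^*(T^2;\R)$ is the exterior algebra on $H^1(T^2;\R)\cong\R^2$, it is generated as an $\R$-algebra in degree $1$; as $\rho^*$ is a ring homomorphism, it suffices to show that $\rho^*:H^1(\mathbb{Z}_2\times R;\R)\to H^1(\mathbb{Z}^2;\R)$ is onto, for then the cup product of two preimages of a basis of $H^1$ maps to the generator of $H^2$, and $H^0$ is handled by the unit.

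For the degree-one statement I would use $H^1(\pi;\R)=\mathrm{Hom}(\pi,\R)$ for trivial coefficients, so that the map in question is $\mathrm{Hom}(\mathbb{Z}_2\times R,\R)\to\mathrm{Hom}(\mathbb{Z}^2,\R)$, $\psi\mapsto\psi\circ\rho$. The torsion summand $\mathbb{Z}_2$ drops out, so the source is $\mathrm{Hom}(R,\R)$; and since $R$ is identified, via $x\mapsto\log|x|$ (the image of $\Aut(\h)_d\cong\R\setminus\{0\}$ in its $\R$-factor), with the additive group of $\R$ regarded as a $\mathbb{Q}$-vector space, $\mathrm{Hom}(R,\R)=\mathrm{Hom}_{\mathbb{Q}}(\R,\R)$. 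Denoting by $a_1,a_2\in\R$ the $\R$-components of $\rho(e_1),\rho(e_2)$ on the generators $e_1,e_2$ of $\mathbb{Z}^2$, the map is $\psi\mapsto(\psi(a_1),\psi(a_2))\in\R^2$. If $\rho$ is taken so that $\rho(e_1),\rho(e_2)$ are multiplicatively independent, for instance $\rho(e_1)=2$ and $\rho(e_2)=3$, whence $a_1=\log 2$ and $a_2=\log 3$, then $a_1,a_2$ are $\mathbb{Q}$-linearly independent; extending $\{a_1,a_2\}$ to a $\mathbb{Q}$-basis of $\R$ and assigning to a $\mathbb{Q}$-linear functional arbitrary values on that basis realises every pair $(s,t)\in\R^2$. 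Hence $\rho^*$ is surjective in degree $1$, and therefore surjective.

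The main thing to be careful about is the identification $H^1(B_R;\R)=\mathrm{Hom}_{\mathbb{Q}}(\R,\R)$ coming out of the inverse-limit presentation $H^*(B_R;\R)\approx\varprojlim_b H^*(\mathbb{T}_b;\R)$: one must observe that the bonding maps of the system $\{H^1(\mathbb{T}_b;\R)\}$ are the restrictions along the coordinate inclusions $\mathbb{T}_b\hookrightarrow\mathbb{T}_{b'}$, hence split surjections, so the system is Mittag--Leffler, $\varprojlim^1=0$, and $\varprojlim_b H^1(\mathbb{T}_b;\R)=\mathrm{Hom}(\varinjlim_b H_1(\mathbb{T}_b;\mathbb{Z}),\R)=\mathrm{Hom}(R,\R)$; alternatively one sidesteps this by working throughout with the group cohomology of the discrete group $\R_d$ directly. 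It should also be stressed that the argument genuinely needs the $\R$-component of $\rho$ to have $\mathbb{Q}$-rank two — a merely nontrivial representation (e.g.\ one landing in the sign factor) would only make $B_\rho^*$ of rank one, or even zero — and that this is precisely the freedom used in choosing the representation of the preceding example.
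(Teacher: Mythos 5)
The paper states this Key Lemma without any proof, so there is no argument of the authors to compare yours against; what you have written is a correct and complete proof, and it is the natural one. Recognizing $\mathbb{T}_{2}$ and $B_{\mathbb{Z}_2}\times B_{R}$ as $K(\pi,1)$'s, replacing $B_{\rho}^{*}$ by the group-cohomology map $\rho^{*}$, using $H^{1}(G;\R)=\mathrm{Hom}(G,\R)$ for a discrete group $G$, and then exploiting that $H^{*}(T^{2};\R)$ is generated as a ring in degree one while $\rho^{*}$ is multiplicative, is exactly the right reduction; your handling of the $\varprojlim$ presentation of $H^{*}(B_{R};\R)$ (Mittag--Leffler via the split bonding maps, or sidestepping it entirely by working with the group cohomology of $\R$ made discrete) is also sound. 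The one substantive point your proof surfaces, and which deserves to be stated prominently rather than as a closing caveat, is that the lemma as literally stated is \emph{false} for an arbitrary nontrivial $\rho$: if $\rho$ factors through the $\mathbb{Z}_2$ factor (e.g.\ $\rho(e_1)=-1$, $\rho(e_2)=1$) then $B_{\rho}^{*}$ vanishes in positive degrees because $H^{>0}(B_{\mathbb{Z}_2};\R)=0$, and if $\log|\rho(e_1)|$, $\log|\rho(e_2)|$ are $\mathbb{Q}$-linearly dependent then the image of $B_{\rho}^{*}$ in $H^{1}(T^{2};\R)$ is at most a line, whose cup square is zero, so $H^{2}(T^{2};\R)$ is missed. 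The paper's surrounding text only assumes $\rho$ nontrivial, which suffices for the vanishing of $\Gamma^{I}(E^{*};T^{2})$ but not for the surjectivity claimed here; your explicit choice $\rho(e_1)=2$, $\rho(e_2)=3$, giving $\mathbb{Q}$-linearly independent logarithms, is precisely the hypothesis needed to make the Key Lemma true, and the example still serves its purpose with that choice.
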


The  example show that Chern-Weil homomorphism cannot define 
all characteristic classes for transitive Lie algebroid.

\begin{remark}
This example show that there is a natural problem to generalize 
the Chern-Weil homomorphism for non trivial flat bundle $ZL$ of 
local coefficients
for cohomologies that  contain characteristic classes.
\end{remark}

This work is partly supported by scientific program for the Chief 
International Academic Adviser of 
the Harbin institute of technology (2011-2014)(China)
and 
Russian foundation of Basic research grant No.11-01-00057-a.

\end{document}